\newtheorem{theorem}{Theorem}[section]
\newtheorem{corollary}[theorem]{Corollary}
\newtheorem{example}[theorem]{Example}
\newtheorem{lemma}[theorem]{Lemma}
\newtheorem{proposition}[theorem]{Proposition}
\newenvironment{proof}[1][Proof]{\noindent\textbf{#1.} }{\ \rule{0.5em}{0.5em}}
\begin{document}

\title{Lattice norms on the unitization of a truncated normed Riesz space}
\author{Karim Boulabiar\thanks{Corresponding author:
\texttt{karim.boulabiar@ipest.rnu.tn}}\quad and\quad Hamza Hafsi\\{\small Laboratoire de Recherche LATAO}\\{\small D\'{e}partement de Mathematiques, Facult\'{e} des Sciences de Tunis}\\{\small Universit\'{e} de Tunis El Manar, 2092, El Manar, Tunisia}}
\date{\textit{Dedicated to the memory of Coenraad Labuschagne}}
\maketitle

\begin{abstract}
Truncated Riesz spaces was first introduced by Fremlin in the context of
real-valued functions. An appropriate axiomatization of the concept was given
by Ball. Keeping only the first Ball's Axiom (among three) as a definition of
truncated Riesz spaces, the first named author and El Adeb proved that if $E$
is truncated Riesz space then $E\oplus\mathbb{R}$ can be equipped with a
non-standard structure of Riesz space such that $E$ becomes a Riesz subspace
of $E\oplus\mathbb{R}$ and the truncation of $E$ is provided by meet with $1$.
In the present paper, we assume that the truncated Riesz space $E$ has a
lattice norm $\left\Vert .\right\Vert $ and we give a necessary and sufficient
condition for $E\oplus\mathbb{R}$ to have a lattice norm extending $\left\Vert
.\right\Vert $. Moreover, we show that under this condition, the set of all
lattice norms on $E\oplus\mathbb{R}$ extending $\left\Vert .\right\Vert $ has
essentially a largest element $\left\Vert .\right\Vert _{1}$ and a smallest
element $\left\Vert .\right\Vert _{0}$. Also, it turns out that any
alternative lattice norm on $E\oplus\mathbb{R}$ is either equivalent to
$\left\Vert .\right\Vert _{1}$ or equals $\left\Vert .\right\Vert _{0}$. As
consequences, we show that $E\oplus\mathbb{R}$ is a Banach lattice if and only
if $E$ is a Banach lattice and we get a representation's theorem sustained by
the celebrate Kakutani's Representation Theorem.

\end{abstract}

\noindent{\small Mathematics Subject Classification: 46A40;46B40;46B42}

\noindent{\small Keywords: lattice norm, order ideal, Stone's condition,
truncated Banach lattice, truncated normed Riesz space, unitization}

\section{Introduction and some preliminaries}

Truncated Riesz spaces has been defined by Fremlin \cite{F1974} as Riesz
subspaces of $\mathbb{R}^{X}$ satisfying \textsl{Stone's condition}, i.e.,
containing with any non-negative function $x$ its meet $1\wedge x$ with the
constant function $1$. This concept is fundamental to analysis and mainly to
measure theory \cite{F1974,JF1987}. Quite recently, Ball \cite{B2014} provided
an appropriate axiomatization of truncated Riesz spaces. Actually, Ball's
definition deals with Riesz spaces over the rationals and includes three
Axioms. In this paper, we will keep only the first Axiom. Consequently, by a
\textsl{truncated Riesz space} we shall mean a (non-trivial) Riesz space $E$
along with a \textsl{truncation}, that is, a nonzero map $x\rightarrow
x^{\ast}$ from the positive cone $E^{+}$ into itself such that%
\[
x\wedge y^{\ast}\leq x^{\ast}\leq x,\text{ for all }x,y\in E^{+}.
\]
We may prove with very little effort that a nonzero map $x\rightarrow x^{\ast
}$ is a truncation on $E$ if and only if%
\[
x^{\ast}\wedge y=x\wedge y^{\ast},\text{ for all }x,y\in E^{+}.
\]
However, it might be wondered about the connection between this abstract
definition and the original Fremlin's definition. An answer to this question
was given recently by the first named author and El Adeb \cite{BE2017}.
Indeed, they proved that if $E$ is a truncated Riesz space, then the direct
sum $E\oplus\mathbb{R}$ can be equipped with a non-standard structure of a
Riesz space such that $E$ is a Riesz subspace of $E\oplus\mathbb{R}$ and the
equality%
\[
x^{\ast}=1\wedge x\text{ in }E\oplus\mathbb{R}\text{ for all }x\in E^{+}.
\]
Even though this is not in the agenda, it could be interesting to point out
that the Riesz space $E\oplus\mathbb{R}$, called the \textsl{unitization }of
$E$, is a universal object. For details on universal properties of
$E\oplus\mathbb{R}$, the reader can consult the recent reference
\cite{BHM2018}.

Before discussing the content of the present paper, it would be helpful to
digress a bit and talk about the classical unitization process of a
(non-unital) Banach algebra $A$ \cite{BD1973}. This process is satisfactory
because it produces a Banach algebra $A\oplus\mathbb{R}$ with $1$ as
multiplicative unit, which plays a fundamental role, for instance, in studying
spectral properties of $A$ itself. However, such a technique loses completely
its effectiveness in the context of Banach Lattices. To better understand the
failure, let $E$ be a Banach lattice and assume that the vector space
$E\oplus\mathbb{R}$ is equipped with its usual coordinatewise ordering. Of
course, $E\oplus\mathbb{R}\ $under the natural norm given by%
\[
\left\Vert x+\alpha\right\Vert =\left\Vert x\right\Vert +\left\vert
\alpha\right\vert ,\text{ for all }x\in E\text{ and }\alpha\in\mathbb{R}%
\]
is a Banach lattice. However, the equality $x\wedge1=0\ $holds in
$E\oplus\mathbb{R}\ $for all $x\in E^{+}$. This means that this
lattice-ordered structure on $E\oplus\mathbb{R}$ is superfluous and adds
nothing of substance to $E$.

Our main purpose in the present work is to investigate the unitization
$E\oplus\mathbb{R}$ if the given truncated Riesz space $E$ is simultaneously a
normed Riesz space. To be a little more precise, let $E$ be such a Riesz
space. First of all, we want to know wether or not the lattice norm
$\left\Vert .\right\Vert $ of $E$ can be extended to a lattice norm
$\left\Vert .\right\Vert _{u}$ on $E\oplus\mathbb{R}$. If so, we would have,
for every $x\in E^{+}$,%
\[
\left\Vert x^{\ast}\right\Vert =\left\Vert x^{\ast}\right\Vert _{u}=\left\Vert
1\wedge x\right\Vert _{u}\leq\left\Vert 1\right\Vert _{u}.
\]
Accordingly, a necessary condition for $E\oplus\mathbb{R}$ to be equipped with
a lattice norm that extends $\left\Vert .\right\Vert $ is that the truncation
$x\rightarrow x^{\ast}$ must be norm-bounded, i.e., there exists $M\in\left(
0,\infty\right)  $ such that%
\[
\left\Vert x^{\ast}\right\Vert \leq M,\quad\text{for all }x\in E^{+}.
\]
Surprisingly enough, as we shall see next, it will turn out that this
condition is sufficient as well. In this regard, we call a\textit{ truncated
normed Riesz space} any normed Riesz space $E$ along with a norm-bounded
truncation $x\rightarrow x^{\ast}$. Hence, if $E$ is a truncated normed Riesz
space then the supremum $\sup\left\{  \left\Vert x^{\ast}\right\Vert :x\in
E^{+}\right\}  $ exists. Observe that, by re-norming the truncated normed
Riesz space $E$ in an obvious way, we can assume that this supremum equals $1$.

\begin{quote}
\textsl{Beginning with the next paragraph, we shall impose the equality}%
\[
\sup\left\{  \left\Vert x^{\ast}\right\Vert :x\in E^{+}\right\}  =1
\]
\textsl{as a blanket assumption} \textsl{on any given truncated normed Riesz
space }$E$\textsl{ }(\textsl{unless otherwise stated explicitly})\textsl{.}
\end{quote}

Now, let's give a short synopsis of the results of this paper. Let $E$ be a
truncated normed Riesz space. By a \textsl{unitization norm} on $E\oplus
\mathbb{R}$ is meant a lattice norm $\left\Vert .\right\Vert _{u}$ on
$E\oplus\mathbb{R}$ satisfying the equality $\left\Vert 1\right\Vert _{u}=1$.
We shall prove that there exists a largest unitization norm $\left\Vert
.\right\Vert _{1}$ on $E\oplus\mathbb{R}$. On the other hand, it could happen
that the truncation of $E$ is provided by meet with an element $e>0$ in $E$,
that is,%
\[
x^{\ast}=e\wedge x,\text{ for all }x\in E^{+}.
\]
Such an element is called a \textsl{truncation unit}. It is readily checked
that $E$ has at most one truncation unit. This assumes that $E$ need not have
one. By way of illustration, let $C_{0}\left(  \mathbb{R}\right)  $ denote the
Banach lattice of all continuous real-valued functions on the reals
$\mathbb{R}$. A moment's thought reveals that the formula%
\[
x^{\ast}\left(  t\right)  =\min\left\{  1,x\left(  t\right)  \right\}  ,\text{
for all }x\in C_{0}\left(  \mathbb{R}\right)  \text{ and }t\in\mathbb{R}%
\]
makes $C_{0}\left(  \mathbb{R}\right)  $ into a truncated normed Riesz space
with no truncation unit. It will turn out that if the truncated normed Riesz
space $E$ has no truncation unit, then there exists a smallest unitization
norm $\left\Vert .\right\Vert _{0}$ on $E\oplus\mathbb{R}$. Next comes a
thorough study of arbitrary unitization norms on $E\oplus\mathbb{R}$. In this
prospect, the extreme unitization norms $\left\Vert .\right\Vert _{1}$ and
$\left\Vert .\right\Vert _{0}$ will play the role of reference norms. But we
will first observe that $E$ is a maximal order ideal in $E\oplus\mathbb{R}$
from which it follows that, if $E\oplus\mathbb{R}$ is equipped with a
unitization norm $\left\Vert .\right\Vert _{u}$, then $E$ is either dense or a
closed set in $E\oplus\mathbb{R}$. Then, we shall prove that $E$ is a closed
set in $\left(  E\oplus\mathbb{R},\left\Vert .\right\Vert _{u}\right)  $ if
and only if $\left\Vert .\right\Vert _{u}$ and $\left\Vert .\right\Vert _{1}$
are equivalent. This means in particular that $E$ is a closed set in $\left(
E\oplus\mathbb{R},\left\Vert .\right\Vert _{1}\right)  $. However, examples
are provided to show that we cannot decide how does $E$ sit in $\left(
E\oplus\mathbb{R},\left\Vert .\right\Vert _{0}\right)  $ (of course, here $E$
does not have truncation unit). In spite of that, we will show that if $E$ is
dense in $\left(  E\oplus\mathbb{R},\left\Vert .\right\Vert _{u}\right)  $ for
some unitization norm $\left\Vert .\right\Vert _{u}$ then $E$ has no
truncation unit and $\left\Vert .\right\Vert _{u}=\left\Vert .\right\Vert
_{0}$. As a consequence of all these results, we shall prove that if
$E\oplus\mathbb{R}$ is equipped with any unitization norm, then $E\oplus
\mathbb{R}$ is a Banach lattice if and only if $E$ is a Banach lattice. The
last application of our results is a representation theorem for truncated
Banach lattices, which extends the celebrate Kakutani's representation theorem
\cite[Theorem 2.1.3]{M1991}. Assume that $E$ is a truncated Banach lattice
such that if $x\in E^{+}$ then $\mu x$ is a fixed point of the truncation for
some $\mu\in\left(  0,\infty\right)  $. Hence, the gauge function defined by%
\[
\left\Vert x\right\Vert _{\infty}=\inf\left\{  \lambda\in\left(
0,\infty\right)  :\left(  \frac{1}{\lambda}\left\vert x\right\vert \right)
^{\ast}=\frac{1}{\lambda}\left\vert x\right\vert \right\}  ,\text{ for all
}x\in E
\]
is a lattice norm on $E$ under which $E$ is lattice isometric to $C_{0}\left(
X\right)  $ for some locally compact Hausdorff space.

We point out finally that, by and large, we follow notation and terminology of
the standard monographs \cite{AB2006,S1974} which will be our main references
on Riesz spaces and Banach lattices.

\section{Extreme unitization norms}

Let $E$ be a truncated Riesz space. As pointed out in the introduction,
$E\oplus\mathbb{R}$ is a Riesz space such that $E$ is a Riesz subspace of
$E\oplus\mathbb{R}$ and%
\[
x^{\ast}=1\wedge x,\text{ for all }x\in E^{+}.
\]
Since, by definition, a truncation is not identically zero, we have $1\wedge
x\neq0$ for at least one element $x\in E^{+}$ with $x\neq0$. The absolute
value in $E\oplus\mathbb{R}$ of $x+\alpha\in E\oplus\mathbb{R}$ is given by%
\begin{equation}
\left\vert x+\alpha\right\vert =\left\{
\begin{array}
[c]{l}%
\left\vert x\right\vert -2\left\vert \alpha\right\vert \left(  \frac{1}%
{\alpha}x^{-}\wedge\frac{-1}{\alpha}x^{+}\right)  ^{\ast}+\left\vert
\alpha\right\vert \text{ if }\alpha\neq0\\
\\
\left\vert x\right\vert \text{ if }\alpha=0.
\end{array}
\right.  \label{av}%
\end{equation}
It follows quickly that if $x+\alpha\geq0$ in $E\oplus\mathbb{R}$ then
$\alpha\geq0$ in $\mathbb{R}$ and%
\begin{equation}
\left\vert x+\alpha\right\vert -\left\vert \alpha\right\vert \in E,\text{ for
all }x\in E\text{ and }\alpha\in\mathbb{R}. \label{in}%
\end{equation}
Proofs of these properties can be found in the recent papers
\cite{BE2017,BHM2018}. On the other hand, the set%
\[
E_{\ast}=\left\{  x\in E:\left\vert x\right\vert ^{\ast}=\left\vert
x\right\vert \right\}
\]
will prove very useful to our work. For instance, we have%
\[
E_{\ast}=\left\{  x\in E:\left\vert x\right\vert \leq1\text{ in }%
E\oplus\mathbb{R}\right\}
\]
and%
\[
1=\sup\left\{  \left\Vert x^{\ast}\right\Vert :x\in E^{+}\right\}
=\sup\left\{  \left\Vert x\right\Vert :x\in E_{\ast}\right\}  .
\]
Moreover, if $E$ is a truncated normed Riesz space and $\left\vert
x\right\vert \leq\alpha$ in $E\oplus\mathbb{R}$ for some $x\in E$ and
$\alpha\in\left(  0,\infty\right)  $ then $\left\Vert x\right\Vert \leq\alpha
$. Indeed, if $y\in E$ then $\ \left\vert y\right\vert \leq1$ in
$E\oplus\mathbb{R}$. Thus, from $\left\vert x\right\vert \leq\alpha$ it
follows that $\frac{1}{\alpha}x\in E_{\ast}$ and so%
\[
\frac{1}{\alpha}\left\Vert x\right\Vert \leq\sup\left\{  \left\Vert
y\right\Vert :x\in E_{\ast}\right\}  =1.
\]
This simple fact will be used below without further mention. The first main
result of this paper is now in order.

\begin{theorem}
\label{largest}Let $E$ be a truncated normed Riesz space. The formula%
\[
\left\Vert x+\alpha\right\Vert _{1}=\left\Vert \left(  \left\vert
x+\alpha\right\vert -\left\vert \alpha\right\vert \right)  ^{+}\right\Vert
+\left\vert \alpha\right\vert ,\text{\quad for all }x\in E\text{ and }%
\alpha\in\mathbb{R}%
\]
defines the largest unitization norm on $E\oplus\mathbb{R}$.
\end{theorem}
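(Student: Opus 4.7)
The first step is to check that the formula is well-defined. By \eqref{in} we have $|x+\alpha|-|\alpha|\in E$, so $(|x+\alpha|-|\alpha|)^{+}\in E^{+}$ and its $\|\cdot\|$-norm is meaningful. The remainder of the plan splits into three stages: an auxiliary monotonicity estimate, verification that $\|\cdot\|_{1}$ is a unitization norm on $E\oplus\mathbb{R}$, and maximality among all such norms.

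\textbf{Key auxiliary estimate.} The recurring tool is the following: if $e,f\in E^{+}$ and $c\geq 0$ satisfy $e\leq f+c\cdot 1$ in $E\oplus\mathbb{R}$, then $\|e\|\leq\|f\|+c$. Indeed, $(e-f)^{+}\leq c\cdot 1$ lies in $E$, so the observation recorded in the paragraph just before the theorem yields $\|(e-f)^{+}\|\leq c$; since $e\leq f+(e-f)^{+}$ in $E^{+}$, the lattice property of $\|\cdot\|$ on $E$ gives $\|e\|\leq\|f\|+c$. This estimate is what converts scalar correction terms of the form $c\cdot 1$ into numerical corrections $c$ inside $\|\cdot\|$.

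\textbf{Verifying $\|\cdot\|_{1}$ is a unitization norm.} Positive definiteness and positive homogeneity are routine from the definition. For monotonicity, suppose $|x+\alpha|\leq|y+\beta|$. A short argument, using \eqref{in} and the fact that $1\wedge(w/c)=(w/c)^{\ast}\in E$ for $w\in E^{+}$ and $c>0$, forces $|\alpha|\leq|\beta|$ (otherwise $1$ itself would have to lie in $E$). Rearranging then gives $|x+\alpha|-|\alpha|\leq(|y+\beta|-|\beta|)+(|\beta|-|\alpha|)\cdot 1$, so taking positive parts (via subadditivity of $a\mapsto a^{+}$) and invoking the auxiliary estimate with $c=|\beta|-|\alpha|\geq 0$, then adding $|\alpha|$, yields $\|x+\alpha\|_{1}\leq\|y+\beta\|_{1}$. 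The triangle inequality follows the same template: starting from $|x+y+\alpha+\beta|\leq|x+\alpha|+|y+\beta|$, subtract $|\alpha+\beta|$, take positive parts, and apply the auxiliary estimate with the nonnegative correction $c=|\alpha|+|\beta|-|\alpha+\beta|$. Finally, $\|x\|_{1}=\|x\|$ for $x\in E$ (take $\alpha=0$) and $\|1\|_{1}=1$ (take $x=0$, $\alpha=1$), so $\|\cdot\|_{1}$ is a bona fide unitization norm.

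\textbf{Maximality.} For any unitization norm $\|\cdot\|_{u}$, the inequality $|x+\alpha|\leq(|x+\alpha|-|\alpha|)^{+}+|\alpha|\cdot 1$ (immediate from $a\leq a^{+}$), combined with the lattice property of $\|\cdot\|_{u}$, its agreement with $\|\cdot\|$ on $E$, and $\|1\|_{u}=1$, immediately delivers $\|x+\alpha\|_{u}\leq\|(|x+\alpha|-|\alpha|)^{+}\|+|\alpha|=\|x+\alpha\|_{1}$. I expect the main obstacle to be the bookkeeping in the triangle inequality: one must juggle an inequality in $E\oplus\mathbb{R}$ that mixes $E$-components with scalar multiples of $1$ and arrange the scalar correction to be nonnegative so that the auxiliary estimate applies. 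Once that estimate is in hand, the rest is a straightforward Riesz-space manipulation.
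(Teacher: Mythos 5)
Your proposal is correct and follows essentially the same route as the paper: the same formula is checked to be a lattice norm via the membership $\left\vert x+\alpha\right\vert -\left\vert \alpha\right\vert \in E$ from (\ref{in}), and maximality comes from dominating $\left\vert x+\alpha\right\vert$ by $\left(  \left\vert x+\alpha\right\vert -\left\vert \alpha\right\vert \right)  ^{+}+\left\vert \alpha\right\vert$. The only differences are organizational: your auxiliary estimate ($e\leq f+c$ with $c\geq0$ implies $\left\Vert e\right\Vert \leq\left\Vert f\right\Vert +c$) packages into one lemma what the paper does by splitting into the cases $\alpha\beta\geq0$ versus $\alpha\beta<0$ (and $\left\vert \alpha\right\vert =\left\vert \beta\right\vert$ versus $\left\vert \alpha\right\vert <\left\vert \beta\right\vert$), and your maximality step replaces the paper's exact decomposition $\left\vert x+\alpha\right\vert =\left(  \left\vert x+\alpha\right\vert -\left\vert \alpha\right\vert \right)  ^{+}-\left(  \left\vert x+\alpha\right\vert -\left\vert \alpha\right\vert \right)  ^{-}+\left\vert \alpha\right\vert$ together with the bound $\left\Vert \left(  \left\vert x+\alpha\right\vert -\left\vert \alpha\right\vert \right)  ^{-}-\left\vert \alpha\right\vert \right\Vert _{u}\leq\left\vert \alpha\right\vert$ by a one-line appeal to the monotonicity of $\left\Vert .\right\Vert _{u}$, a mild but genuine simplification.
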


\begin{proof}
First, notice that%
\[
\left\Vert x\right\Vert _{1}=\left\Vert x\right\Vert ,\text{ for all }x\in E.
\]
Now, to prove that $\left\Vert .\right\Vert _{1}$ is a norm on $E\oplus
\mathbb{R}$, the only point that needs details is the triangle inequality. For
brevity, denote%
\[
\left\langle x,\alpha\right\rangle =\left\vert x+\alpha\right\vert -\left\vert
\alpha\right\vert ,\text{\quad for all }x\in E\text{ and }\alpha\in
\mathbb{R}.
\]
By (\ref{in}), we have%
\[
\left\langle x,\alpha\right\rangle \in E,\text{\quad for all }x\in E\text{ and
}\alpha\in\mathbb{R}.
\]
Let $x,y\in E$ and $\alpha,\beta\in\mathbb{R}$. We have to show that%
\begin{equation}
\left\Vert x+y+\alpha+\beta\right\Vert _{1}\leq\left\Vert x+\alpha\right\Vert
_{1}+\left\Vert y+\beta\right\Vert _{1}. \label{trian}%
\end{equation}
First, assume that $\alpha\beta\geq0$, which means that $\left\vert
\alpha+\beta\right\vert =\left\vert \alpha\right\vert +\left\vert
\beta\right\vert $. Hence,%
\[
\left\langle x+y,\alpha+\beta\right\rangle \leq\left\vert x+\alpha\right\vert
+\left\vert y+\beta\right\vert -\left\vert \alpha\right\vert -\left\vert
\beta\right\vert ,
\]
so,%
\[
\left\langle x+y,\alpha+\beta\right\rangle ^{+}\leq\left\langle x,\alpha
\right\rangle ^{+}+\left\langle y,\beta\right\rangle ^{+}.
\]
This inequality takes place in $E$ and thus%
\[
\left\Vert \left\langle x+y,\alpha+\beta\right\rangle ^{+}\right\Vert
\leq\left\Vert \left\langle x,\alpha\right\rangle ^{+}\right\Vert +\left\Vert
\left\langle y,\beta\right\rangle ^{+}\right\Vert .
\]
This leads quickly to the inequality (\ref{trian}). Now, assume that
$\alpha\beta<0$, that is, $\left\vert \alpha+\beta\right\vert <\left\vert
\alpha\right\vert +\left\vert \beta\right\vert $. From%
\[
\left\vert x+y+\alpha+\beta\right\vert \leq\left\vert x+\alpha\right\vert
+\left\vert y+\beta\right\vert
\]
it follows that%
\[
\left\langle x+y,\alpha+\beta\right\rangle -\left\langle x,\alpha\right\rangle
-\left\langle y,\beta\right\rangle \leq\left\vert \alpha\right\vert
+\left\vert \beta\right\vert -\left\vert \alpha+\beta\right\vert .
\]
But then%
\[
\left(  \left\langle x+y,\alpha+\beta\right\rangle -\left\langle
x,\alpha\right\rangle -\left\langle y,\beta\right\rangle \right)  ^{+}%
\leq\left\vert \alpha\right\vert +\left\vert \beta\right\vert -\left\vert
\alpha+\beta\right\vert .
\]
This yields that%
\[
\left\Vert \left(  \left\langle x+y,\alpha+\beta\right\rangle -\left\langle
x,\alpha\right\rangle -\left\langle y,\beta\right\rangle \right)
^{+}\right\Vert \leq\left\vert \alpha\right\vert +\left\vert \beta\right\vert
-\left\vert \alpha+\beta\right\vert .
\]
Moreover,%
\[
\left\langle x+y,\alpha+\beta\right\rangle ^{+}\leq\left(  \left\langle
x+y,\alpha+\beta\right\rangle -\left\langle x,\alpha\right\rangle
-\left\langle y,\beta\right\rangle \right)  ^{+}+\left\langle x,\alpha
\right\rangle ^{+}+\left\langle y,\beta\right\rangle ^{+}.
\]
Therefore,%
\[
\left\Vert \left\langle x+y,\alpha+\beta\right\rangle ^{+}\right\Vert
\leq\left\Vert \left\langle x,\alpha\right\rangle ^{+}\right\Vert +\left\vert
\alpha\right\vert +\left\Vert \left\langle y,\beta\right\rangle ^{+}%
\right\Vert +\left\vert \beta\right\vert -\left\vert \alpha+\beta\right\vert
\]
and the inequality (\ref{trian}) follows.

At this point, we prove that $\left\Vert .\right\Vert $ is a lattice norm.
Pick $x,y\in E$ and $\alpha,\beta\in\mathbb{R}$ such that $\left\vert
x+\alpha\right\vert \leq\left\vert y+\beta\right\vert $. In particular, we
have $\left\vert \alpha\right\vert \leq\left\vert \beta\right\vert $. If
$\left\vert \alpha\right\vert =\left\vert \beta\right\vert $ then
$\left\langle x,\alpha\right\rangle \leq\left\langle y,\beta\right\rangle $
and so $\left\langle x,\alpha\right\rangle ^{+}\leq\left\langle y,\beta
\right\rangle ^{+}$. We get $\left\Vert \left\langle x,\alpha\right\rangle
^{+}\right\Vert \leq\left\Vert \left\langle y,\beta\right\rangle
^{+}\right\Vert $ from which it follows that $\left\Vert x+\alpha\right\Vert
_{1}\leq\left\Vert y+\beta\right\Vert _{1}$. Now, suppose $\left\vert
\alpha\right\vert <\left\vert \beta\right\vert $. Thus,%
\[
\left\langle x,\alpha\right\rangle -\left\langle y,\beta\right\rangle
\leq\left\vert \beta\right\vert -\left\vert \alpha\right\vert
\]
and so%
\[
\left(  \left\langle x,\alpha\right\rangle -\left\langle y,\beta\right\rangle
\right)  ^{+}\leq\left\vert \beta\right\vert -\left\vert \alpha\right\vert .
\]
We get%
\[
\left\Vert \left(  \left\langle x,\alpha\right\rangle -\left\langle
y,\beta\right\rangle \right)  ^{+}\right\Vert \leq\left\vert \beta\right\vert
-\left\vert \alpha\right\vert .
\]
Furthermore,%
\[
\left\langle x,\alpha\right\rangle ^{+}\leq\left(  \left\langle x,\alpha
\right\rangle -\left\langle y,\beta\right\rangle \right)  ^{+}+\left\langle
y,\beta\right\rangle ^{+}.
\]
Accordingly,%
\[
\left\Vert \left\langle x,\alpha\right\rangle ^{+}\right\Vert \leq\left\Vert
\left(  \left\langle x,\alpha\right\rangle -\left\langle y,\beta\right\rangle
\right)  ^{+}\right\Vert +\left\Vert \left\langle y,\beta\right\rangle
^{+}\right\Vert \leq\left\vert \beta\right\vert -\left\vert \alpha\right\vert
+\left\Vert \left\langle y,\beta\right\rangle ^{+}\right\Vert .
\]
We derive that $\left\Vert x+\alpha\right\Vert _{1}\leq\left\Vert
y+\beta\right\Vert _{1}$, meaning that $\left\Vert .\right\Vert _{1}$ is a
lattice norm on $E\oplus\mathbb{R}$. Observe now that $\left\Vert 1\right\Vert
_{1}=1$ and so $\left\Vert .\right\Vert _{1}$ is a unitization norm on
$E\oplus\mathbb{R}$.

Finally, we claim that $\left\Vert .\right\Vert _{1}$ is the largest
unitization norm on $E\oplus\mathbb{R}$. To this end, assume that $\left\Vert
.\right\Vert _{u}$ is another unitization norm of $E\oplus\mathbb{R}$ and pick
$x+\alpha\in E\oplus\mathbb{R}$. If $\alpha=0$ then%
\[
\left\Vert x\right\Vert _{u}=\left\Vert x\right\Vert =\left\Vert x\right\Vert
_{1}.
\]
Suppose that $\alpha\neq0$ and observe that, by (\ref{in}),%
\begin{align*}
\left\Vert x+\alpha\right\Vert _{u}  &  =\left\Vert \left\vert x+\alpha
\right\vert \right\Vert _{u}\\
&  =\left\Vert \left\vert x+\alpha\right\vert -\left\vert \alpha\right\vert
+\left\vert \alpha\right\vert \right\Vert _{u}\\
&  =\left\Vert \left(  \left\vert x+\alpha\right\vert -\left\vert
\alpha\right\vert \right)  ^{+}-\left(  \left\vert x+\alpha\right\vert
-\left\vert \alpha\right\vert \right)  ^{-}+\left\vert \alpha\right\vert
\right\Vert _{u}\\
&  \leq\left\Vert \left(  \left\vert x+\alpha\right\vert -\left\vert
\alpha\right\vert \right)  ^{+}\right\Vert _{u}+\left\Vert \left(  \left\vert
x+\alpha\right\vert -\left\vert \alpha\right\vert \right)  ^{-}-\left\vert
\alpha\right\vert \right\Vert _{u}\\
&  =\left\Vert \left(  \left\vert x+\alpha\right\vert -\left\vert
\alpha\right\vert \right)  ^{+}\right\Vert +\left\Vert \left(  \left\vert
x+\alpha\right\vert -\left\vert \alpha\right\vert \right)  ^{-}-\left\vert
\alpha\right\vert \right\Vert _{u}.
\end{align*}
On the other hand,%
\[
0\leq\left\vert x+\alpha\right\vert \wedge\left\vert \alpha\right\vert
=\left\vert \alpha\right\vert -\left(  \left\vert x+\alpha\right\vert
-\left\vert \alpha\right\vert \right)  ^{-}\leq\left\vert \alpha\right\vert .
\]
But then%
\[
\left\Vert \left(  \left\vert x+\alpha\right\vert -\left\vert \alpha
\right\vert \right)  ^{-}-\left\vert \alpha\right\vert \right\Vert _{u}%
\leq\left\Vert \left\vert \alpha\right\vert \right\Vert _{u}=\left\vert
\alpha\right\vert .
\]
We derive that%
\[
\left\Vert x+\alpha\right\Vert _{u}\leq\left\Vert \left(  \left\vert
x+\alpha\right\vert -\left\vert \alpha\right\vert \right)  ^{+}\right\Vert
+\left\vert \alpha\right\vert =\left\Vert x+\alpha\right\Vert _{1}.
\]
This completes the proof of the theorem.
\end{proof}

Recall that an element $e>0$ in a truncated Riesz space is called a
\textsl{truncation unit} if the truncation is provided by meet with $e$,
meaning that $x^{\ast}=e\wedge x$ for all $x\in E^{+}$. The following result
will be useful for later purposes.

\begin{proposition}
\label{ideal}Let $E$ be a truncated Riesz space. Then $E$ is maximal order
ideal in $E\oplus\mathbb{R}$.
\end{proposition}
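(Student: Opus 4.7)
The plan is to deduce both defining properties of a maximal order ideal directly from equation (\ref{in}) and the sign constraint recalled immediately before it, namely that $x+\alpha\geq 0$ in $E\oplus\mathbb{R}$ forces $\alpha\geq 0$. Together these encode the fact that the projection $\pi:E\oplus\mathbb{R}\to\mathbb{R}$, $x+\alpha\mapsto\alpha$, is a positive linear map with kernel $E$; the claim amounts to upgrading this kernel from a Riesz subspace to an order ideal, and then ruling out any strictly larger proper order ideal.

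For solidity, I would take $y=z+\alpha\in E\oplus\mathbb{R}$ and $x\in E$ with $|y|\leq|x|$, and observe that by (\ref{in}) one has $|y|-|\alpha|\in E$, while $|x|\in E$ already. Consequently $|x|-|y|$ decomposes as an element of $E$ plus the real scalar $-|\alpha|$. Since this element is positive in $E\oplus\mathbb{R}$, the sign constraint forces $-|\alpha|\geq 0$, hence $\alpha=0$ and $y\in E$. This shows that $E$ is solid, and since $E$ is already a linear (indeed, Riesz) subspace of $E\oplus\mathbb{R}$, it is an order ideal.

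For maximality, I would let $J$ be an order ideal properly containing $E$ and pick $y=z+\alpha\in J\setminus E$, which forces $\alpha\neq 0$. Solidity of $J$ yields $|y|\in J$, and (\ref{in}) gives $|y|-|\alpha|\in E\subseteq J$; subtracting, $|\alpha|\in J$, and because $J$ is a linear subspace and $|\alpha|\neq 0$ this means the unit $1\in J$. Combining $E\subseteq J$ with $1\in J$ gives $J\supseteq E+\mathbb{R}\cdot 1=E\oplus\mathbb{R}$, so $J=E\oplus\mathbb{R}$. No genuine obstacle is foreseen; the whole argument is a tidy bookkeeping exercise built around (\ref{in}), which is precisely the statement that $|x+\alpha|$ has $\mathbb{R}$-coordinate exactly $|\alpha|$.
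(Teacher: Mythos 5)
Your proof is correct and follows essentially the same route as the paper's: the key step in both is that $|x|-|y|\geq 0$ has real component $-|\alpha|$, which the sign constraint forces to be nonnegative, hence $\alpha=0$ (you route this through (\ref{in}) where the paper invokes the explicit formula (\ref{av}), a cosmetic difference). Your maximality argument is just a more explicit version of the paper's terse ``$1\notin E$, hence maximal,'' both resting on $E$ having codimension one.
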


\begin{proof}
Pick $x,y\in E$ and $\alpha\in\mathbb{R}$ such that $\left\vert x+\alpha
\right\vert \leq\left\vert y\right\vert $. We claim that $\alpha=0$.
Otherwise, we would have, by (\ref{av}),%
\[
0\leq\left\vert y\right\vert -\left\vert x+\alpha\right\vert =\left[
\left\vert y\right\vert -\left\vert x\right\vert +2\left\vert \alpha
\right\vert \left(  \frac{1}{\alpha}x^{-}\wedge\frac{-1}{\alpha}x^{+}\right)
^{\ast}\right]  -\left\vert \alpha\right\vert .
\]
But then $-\left\vert \alpha\right\vert $ should be positive and so $\alpha
=0$. This contradiction yields that $E$ is an order ideal in $E\oplus
\mathbb{R}$. Moreover, $1\notin E$ and so $E$ is a maximal order ideal in
$E\oplus\mathbb{R}$, completing the proof of the proposition.
\end{proof}

The second main theorem of our work follows next.

\begin{theorem}
\label{smallest}Let $E$ be a truncated normed Riesz space with no truncation
unit. Then the function that takes each $x+\alpha\in E\oplus\mathbb{R}$ to the
positive real number%
\[
\left\Vert x+\alpha\right\Vert _{0}=\sup\left\{  \left\Vert y\right\Vert :y\in
E\text{ and }\left\vert y\right\vert \leq\left\vert x+\alpha\right\vert
\right\}
\]
is the smallest unitization norm on $E\oplus\mathbb{R}$.
\end{theorem}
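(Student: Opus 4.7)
The plan is to verify, in order, that (i) the supremum defining $\|x+\alpha\|_{0}$ is finite, (ii) $\|\cdot\|_{0}$ is a lattice seminorm, (iii) it is positive-definite, (iv) $\|1\|_{0}=1$, and (v) it is dominated by every other unitization norm; only step (iii) actually uses the no-truncation-unit hypothesis. Proposition~\ref{ideal} will be used throughout: since $E$ is an ideal in $E\oplus\mathbb{R}$, any $w\in E\oplus\mathbb{R}$ with $0\leq w\leq|y|$ for some $y\in E$ lies in $E$.

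For finiteness, given $y\in E$ with $|y|\leq|x+\alpha|$, the decomposition $|y|=(|y|-|\alpha|)^{+}+|y|\wedge|\alpha|$, combined with (\ref{in}) and the preliminary estimate $\||y|\wedge|\alpha|\|\leq|\alpha|$, gives $\|y\|\leq\|(|x+\alpha|-|\alpha|)^{+}\|+|\alpha|=\|x+\alpha\|_{1}$, so $\|x+\alpha\|_{0}\leq\|x+\alpha\|_{1}<\infty$. Lattice monotonicity and positive homogeneity are immediate from the definition. For the triangle inequality, if $y\in E$ satisfies $|y|\leq|(x+\alpha)+(x'+\alpha')|\leq|x+\alpha|+|x'+\alpha'|$, Riesz decomposition writes $|y|=v+w$ with $0\leq v\leq|x+\alpha|$ and $0\leq w\leq|x'+\alpha'|$; both lie in $E$ by the ideal property, so $\|y\|\leq\|v\|+\|w\|\leq\|x+\alpha\|_{0}+\|x'+\alpha'\|_{0}$, and the supremum over $y$ closes this step.

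Positive definiteness is the crux. Suppose $\|x+\alpha\|_{0}=0$ and $x+\alpha\ne 0$; the case $\alpha=0$ is handled by taking $y=x$, so $\alpha\ne 0$. For every $u\in E^{+}$, $u\wedge|x+\alpha|$ lies in $E$, is $\leq|x+\alpha|$, and so vanishes; hence $|x+\alpha|\perp E^{+}$. Using (\ref{in}), write $|x+\alpha|=v+|\alpha|$ with $v\in E$; taking $u=v^{+}$ in this disjointness and rewriting via $a\wedge b=a-(a-b)^{+}$ gives $(v+|\alpha|)-(|\alpha|-v^{-})^{+}=0$, which forces $v^{+}=0$, $v=-v^{-}$, and $v^{-}\leq|\alpha|$. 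Setting $e=v^{-}/|\alpha|\in E^{+}$ so that $0\leq e\leq 1$ (the nondegeneracy of the truncation excludes $e=0$, since otherwise $|\alpha|\wedge y=0$ for every $y\in E^{+}$ would force $y^{*}=0$ for all $y$), and rescaling $u\mapsto|\alpha|u$ turns the disjointness into $(1-e)\wedge y=0$ for every $y\in E^{+}$. Since $y^{*}\leq y$ forces $(1-e)\wedge y^{*}=0$ and hence $(1-e)\vee y^{*}=(1-e)+y^{*}$, combining the distributive identity $1\wedge((1-e)\vee y)=(1-e)\vee y^{*}$ with $1\wedge(1-e+y)=1-(e-y)^{+}$ yields $(e-y)^{+}=e-y^{*}$, i.e., $e\wedge y=y^{*}$ for every $y\in E^{+}$. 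Thus $e$ is a truncation unit of $E$, contradicting the hypothesis. This extraction of a truncation unit from the bare disjointness $|x+\alpha|\perp E$ is the main obstacle.

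Finally, $\|1\|_{0}=\sup\{\|y\|:y\in E_{\ast}\}=1$ by the blanket assumption, so $\|\cdot\|_{0}$ is a unitization norm. Minimality is immediate: if $\|\cdot\|_{u}$ is another unitization norm and $y\in E$ satisfies $|y|\leq|x+\alpha|$, the lattice property of $\|\cdot\|_{u}$ gives $\|y\|=\|y\|_{u}\leq\|x+\alpha\|_{u}$, and the supremum over $y$ yields $\|x+\alpha\|_{0}\leq\|x+\alpha\|_{u}$.
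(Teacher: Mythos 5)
Your proof is correct, and while its overall skeleton matches the paper's (finiteness via domination by $\Vert\cdot\Vert_{1}$, the seminorm axioms via the ideal property of Proposition~\ref{ideal}, positive definiteness by extracting a truncation unit, and the trivial minimality argument), the crux step is executed by a genuinely different route. The paper reduces to $x+\alpha\geq0$, first deduces $x^{+}=0$ from $x^{+}\leq x+\alpha$, and then tests the specific elements $-x^{-}+\alpha\left(1\wedge(w+y)\right)$ with $w=\alpha^{-1}x^{-}$ against the vanishing supremum to read off $1\wedge(w+y)=w$ and hence $1\wedge y=w\wedge y$. You instead invoke the hypothesis $\Vert x+\alpha\Vert_{0}=0$ exactly once, to obtain the full disjointness $u\wedge\vert x+\alpha\vert=0$ for all $u\in E^{+}$, and then everything else is pure lattice algebra: the identity $a\wedge b=a-(a-b)^{+}$ forces $\vert x+\alpha\vert=\vert\alpha\vert(1-e)$ with $e=v^{-}/\vert\alpha\vert$, and distributivity of $\wedge$ over $\vee$ together with $1\wedge(1-e)=1-e$ and $1\wedge y=y^{\ast}$ converts $(1-e)\perp E^{+}$ into $e\wedge y=y^{\ast}$. (Your candidate $e$ coincides with the paper's $w$, since $v=x$ and $v^{-}=x^{-}$ in the positive case.) Your version isolates more cleanly what the vanishing of $\Vert\cdot\Vert_{0}$ actually gives, namely that $\vert x+\alpha\vert$ is disjoint from $E$, and shows as a purely order-theoretic fact that a positive element of $E\oplus\mathbb{R}$ of the form $\vert\alpha\vert-v^{-}$ disjoint from $E$ produces a truncation unit; the paper's computation is shorter but more ad hoc. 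The remaining minor divergences, re-deriving $\Vert y\Vert\leq\Vert x+\alpha\Vert_{1}$ from the splitting $\vert y\vert=(\vert y\vert-\vert\alpha\vert)^{+}+\vert y\vert\wedge\vert\alpha\vert$ rather than quoting Theorem~\ref{largest}, and using the Riesz decomposition property for the triangle inequality instead of the estimate $\vert z\vert\leq\vert z\vert\wedge\vert x+\alpha\vert+\vert z\vert\wedge\vert y+\beta\vert$, are equivalent in substance.
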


\begin{proof}
Let $x+\alpha\in E\oplus\mathbb{R}$ and put%
\[
A=\left\{  \left\Vert y\right\Vert :y\in E\text{ and }\left\vert y\right\vert
\leq\left\vert x+\alpha\right\vert \right\}  .
\]
Clearly, $0\in A\neq\emptyset$ and if $y\in E$ with $\left\vert y\right\vert
\leq\left\vert x+\alpha\right\vert $ then%
\[
\left\Vert y\right\Vert =\left\Vert y\right\Vert _{1}\leq\left\Vert
x+\alpha\right\Vert _{1}.
\]
It follows that $A$ has a supremum $\left\Vert x+\alpha\right\Vert _{0}$.
Moreover, it is trivial that $\left\Vert x+\alpha\right\Vert _{0}=\left\Vert
\left\vert x+\alpha\right\vert \right\Vert _{0}$. Also, it is evident that
$\left\Vert x\right\Vert _{0}=\left\Vert x\right\Vert $. Now, assume that
$0\leq x+\alpha$ and notice that, in this situation, $\alpha\geq0$. We claim
that $x+\alpha=0$ if $\left\Vert x+\alpha\right\Vert _{0}=0$. We have nothing
to establish if $\alpha=0$. So, we suppose that $\alpha>0$. From $x+\alpha
\geq0$ it follows directly that $x^{-}\leq\alpha$. Thus,%
\[
x^{+}=x+x^{-}\leq x+\alpha=\left\vert x+\alpha\right\vert
\]
and so $\left\Vert x^{+}\right\Vert \in A$. Therefore, $\left\Vert
x^{+}\right\Vert \leq\left\Vert x+\alpha\right\Vert _{0}=0$. Hence, $x^{+}=0$.
Put $w=\alpha^{-1}x^{-}$ and pick $y\in E^{+}$. Clearly, $w\leq1$ from which
we derive that%
\[
0\leq-x^{-}+\alpha\left(  1\wedge\left(  w+y\right)  \right)  \leq
-x^{-}+\alpha=x+\alpha.
\]
Then,%
\[
\left\Vert -x^{-}+\alpha\left(  1\wedge\left(  w+y\right)  \right)
\right\Vert \leq\left\Vert x+\alpha\right\Vert _{0}=0.
\]
Accordingly,%
\[
1\wedge\left(  w+y\right)  =\alpha^{-1}x^{-}=w.
\]
Whence,%
\[
1\wedge y=1\wedge y\wedge\left(  w+y\right)  =y\wedge w.
\]
This means that $w$ is a truncation unit in $E$ which contradicts the
hypothesis. We get $\alpha=0$ and so $x+\alpha=0$ (since $x^{+}=0$ and
$x^{-}\leq\alpha$). Now, let $r$ be a nonzero real number and $x+\alpha
,y+\beta\in E\oplus\mathbb{R}$. Then%
\begin{align*}
\left\Vert r\left(  x+\alpha\right)  \right\Vert _{0}  &  =\sup\left\{
\left\Vert z\right\Vert :z\in E\text{ and }\left\vert z\right\vert
\leq\left\vert r\left(  x+\alpha\right)  \right\vert \right\} \\
&  =\sup\left\{  \left\Vert z\right\Vert :z\in E\text{ and }\left\vert
r^{-1}z\right\vert \leq\left\vert x+\alpha\right\vert \right\} \\
&  =\left\vert r\right\vert \sup\left\{  \left\Vert r^{-1}z\right\Vert :z\in
E\text{ and }\left\vert r^{-1}z\right\vert \leq\left\vert x+\alpha\right\vert
\right\} \\
&  =\left\vert r\right\vert \sup\left\{  \left\Vert z\right\Vert :z\in E\text{
and }\left\vert z\right\vert \leq\left\vert x+\alpha\right\vert \right\}
=\left\vert r\right\vert \left\Vert x+\alpha\right\Vert _{0}.
\end{align*}
On the other hand, if $z\in E$ and $\left\vert z\right\vert \leq\left\vert
x+y+\alpha+\beta\right\vert $ then%
\begin{align*}
\left\vert z\right\vert  &  =\left\vert z\right\vert \wedge\left\vert
x+y+\alpha+\beta\right\vert \\
&  \leq\left\vert z\right\vert \wedge\left(  \left\vert x+\alpha\right\vert
+\left\vert y+\beta\right\vert \right) \\
&  \leq\left(  \left\vert z\right\vert \wedge\left\vert x+\alpha\right\vert
\right)  +\left(  \left\vert z\right\vert \wedge\left\vert y+\beta\right\vert
\right)  .
\end{align*}
Since $E$ is an order ideal in $E\oplus\mathbb{R}$ (see Proposition
\ref{ideal}), we have $\left\vert z\right\vert \wedge\left\vert x+\alpha
\right\vert \in E$ and $\left\vert z\right\vert \wedge\left\vert
y+\beta\right\vert \in E$. Accordingly,%

\begin{align*}
\left\Vert z\right\Vert  &  \leq\left\Vert \left(  \left\vert z\right\vert
\wedge\left\vert x+\alpha\right\vert \right)  +\left(  \left\vert z\right\vert
\wedge\left\vert y+\beta\right\vert \right)  \right\Vert \\
&  \leq\left\Vert \left(  \left\vert z\right\vert \wedge\left\vert
x+\alpha\right\vert \right)  \right\Vert +\left\Vert \left(  \left\vert
z\right\vert \wedge\left\vert y+\beta\right\vert \right)  \right\Vert \\
&  \leq\left\Vert x+\alpha\right\Vert _{0}+\left\Vert y+\beta\right\Vert _{0}.
\end{align*}
Hence,%
\[
\left\Vert x+\alpha+y+\beta\right\Vert _{0}\leq\left\Vert x+\alpha\right\Vert
_{0}+\left\Vert y+\beta\right\Vert _{0}%
\]
and finally $\left\Vert .\right\Vert _{0}$ is a lattice norm on $E\oplus
\mathbb{R}$. It remains to show that $\left\Vert 1\right\Vert _{0}=1$. Indeed,%
\[
\left\Vert 1\right\Vert _{0}=\sup\left\{  \left\Vert x\right\Vert :x\in
E\text{ and }\left\vert x\right\vert \leq1\right\}  =\sup\left\{  \left\Vert
x\right\Vert :x\in E_{\ast}\right\}  .
\]
We have%
\[
1=\inf\left\{  M>0:\left\Vert x\right\Vert \leq M\text{ for all }x\in E_{\ast
}\right\}  .
\]
This implies that $1\leq\left\Vert 1\right\Vert _{0}$ because $\left\Vert
1\right\Vert _{0}\geq\left\Vert x\right\Vert $ for all $x\in E_{\ast}$.
Conversely, it is easily seen that $\left\Vert x\right\Vert \leq1$ for all
$x\in E_{\ast}$. So $\left\Vert 1\right\Vert _{0}\leq1$, meaning that
$\left\Vert 1\right\Vert _{0}=1$. Let $\left\Vert .\right\Vert _{u}$ be a
unitization norm on $E$. If $x+\alpha\in E\oplus\mathbb{R}$ and $y\in E$ with
$\left\vert y\right\vert \leq\left\vert x+\alpha\right\vert $ then%
\[
\left\Vert y\right\Vert =\left\Vert y\right\Vert _{u}\leq\left\Vert
x+\alpha\right\Vert _{u}.
\]
It follows that $\left\Vert x+\alpha\right\Vert _{u}$ is an upper bound of
$A$. Thus
\[
\left\Vert x+\alpha\right\Vert _{0}=\sup A\leq\left\Vert x+\alpha\right\Vert
_{u},
\]
completing the proof of the theorem.
\end{proof}

Let's say a few words on $\left\Vert .\right\Vert _{0}$ when $E$ has a
unitization unit. To this end, we need the following lemma, which will be
useful for later purpose also.

\begin{lemma}
\label{unit}Let $E$ be a truncated Riesz space with a truncation unit $e$.
Then $0<e<1$ in $E\oplus\mathbb{R}$ and%
\[
\left\vert x\right\vert \wedge\left(  1-e\right)  =0\text{ in }E\oplus
\mathbb{R}\text{ for all }x\in E.
\]

\end{lemma}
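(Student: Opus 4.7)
The plan is to exploit the two descriptions of the truncation: the one intrinsic to $E$ (meet with $e$) and the one provided by the unitization (meet with $1$). Evaluating both on cleverly chosen positive elements will pin down the relationship between $e$ and $1$.

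First, I would apply $(\cdot)^{\ast}$ to $e$ itself. Since $e$ is a truncation unit, $e^{\ast}=e\wedge e=e$, while from the Boulabiar--El Adeb construction we also have $e^{\ast}=1\wedge e$ in $E\oplus\mathbb{R}$. Comparing gives $1\wedge e=e$, hence $e\leq 1$. Combined with $e>0$ (by definition of truncation unit) and $e\neq 1$ (Proposition \ref{ideal} says $E$ is a proper order ideal of $E\oplus\mathbb{R}$, so $1\notin E$ while $e\in E$), this yields $0<e<1$.

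Next, for an arbitrary $x\in E$, set $u=|x|\in E^{+}$ and apply the same trick to $u+e\in E^{+}$. On the one hand $(u+e)^{\ast}=e\wedge(u+e)=e$ because $e\leq u+e$; on the other hand $(u+e)^{\ast}=1\wedge(u+e)$ in $E\oplus\mathbb{R}$. Hence
\[
(u+e)\wedge 1=e.
\]
From this I would conclude $u\wedge(1-e)=0$. The cleanest route is the Riesz-space identity $(u+e)\wedge 1 = e + u\wedge(1-e)$, valid whenever $0\leq e\leq 1$, which immediately gives $u\wedge(1-e)=0$. If one prefers to avoid invoking that identity, the same conclusion drops out directly: any $y\in(E\oplus\mathbb{R})^{+}$ with $y\leq u$ and $y\leq 1-e$ satisfies $y+e\leq u+e$ and $y+e\leq 1$, hence $y+e\leq(u+e)\wedge 1=e$, forcing $y=0$.

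There is no real obstacle here; the only subtle point is the simultaneous use of the two representations of the truncation, and the choice of the test element $u+e$ rather than $u$ itself (applying the truncation to $u$ alone would give only $u\wedge 1 = u\wedge e$, which is weaker and does not directly disentangle $1-e$ from $u$).
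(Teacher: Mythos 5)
Your proposal is correct and follows essentially the same route as the paper: the paper likewise derives $e=e^{\ast}=e\wedge 1$ to get $0<e<1$, and then computes $\left\vert x\right\vert \wedge\left(1-e\right)=\left(\left(e+\left\vert x\right\vert\right)\wedge 1\right)-e=\left(e+\left\vert x\right\vert\right)^{\ast}-e=e\wedge\left(e+\left\vert x\right\vert\right)-e=0$, which is exactly your test element $u+e$ combined with the translation identity. The only cosmetic difference is that the identity $\left(u+e\right)\wedge 1=e+u\wedge\left(1-e\right)$ is just translation invariance of the lattice operations and needs no hypothesis on $e$; your fallback verification is also fine.
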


\begin{proof}
As $1\notin E$ and $e=e\wedge e=e^{\ast}=e\wedge1,$ we get $0<e\leq1$. Pick
$x\in E$ and write%
\begin{align*}
\left\vert x\right\vert \wedge\left(  1-e\right)   &  =\left(  \left(
e+\left\vert x\right\vert \right)  \wedge1\right)  -e=\left(  e+\left\vert
x\right\vert \right)  ^{\ast}-e\\
&  =\left(  e\wedge\left(  e+\left\vert x\right\vert \right)  \right)
-e=e-e=0.
\end{align*}
This leads to the desired result.
\end{proof}

So, let $E$ be a truncated normed Riesz space with a truncation unit $e$. Pick
$x+\alpha\in E\oplus\mathbb{R}$ and observe that if $y\in E$ and $\left\vert
y\right\vert \leq\left\vert x+\alpha\right\vert $ then%
\[
\left\vert y\right\vert \leq\left\vert x+\alpha\right\vert =\left\vert
x+\alpha e+\alpha\left(  1-e\right)  \right\vert .
\]
By Lemma \ref{unit}, we get $\left\vert y\right\vert \leq\left\vert x+\alpha
e\right\vert $. Thus,%
\[
\left\Vert x+\alpha\right\Vert _{0}=\sup\left\{  \left\Vert y\right\Vert :y\in
E\text{ and }\left\vert y\right\vert \leq\left\vert x+\alpha\right\vert
\right\}  =\left\Vert x+\alpha e\right\Vert .
\]
Taking $x=e$ and $\alpha=-1$, we see that $e-1\neq0$ thought $\left\Vert
e-1\right\Vert _{0}=0$, meaning that $\left\Vert .\right\Vert _{0}$ is not
even a norm. In spite of that, reading carefully the proof of Theorem
\ref{smallest}, we see that $\left\Vert .\right\Vert _{0}$ is always a lattice
semi-norm on $E\oplus\mathbb{R}$.

\section{Arbitrary unitization norms}

Assume that $E$ is a truncated normed Riesz space and that $E\oplus\mathbb{R}$
is equipped with any unitization norm $\left\Vert .\right\Vert _{u}$. By
Proposition \ref{ideal}, $E$ is an order ideal in $E\oplus\mathbb{R}$ and so
is its closure in $E\oplus\mathbb{R}$ (see, e.g., Theorem 15.19 in
\cite{Z1997}). We infer straightforwardly, by maximality, that $E$ is either
dense or a closed set in $E\oplus\mathbb{R}$. This observation will come in handy.

\begin{theorem}
\label{closed}Let $E$ be a truncated normed Riesz space and $E\oplus
\mathbb{R}$ be equipped with a unitization norm $\left\Vert .\right\Vert _{u}%
$. Then $E$ is a closed set in $E\oplus\mathbb{R}$ if and only if $\left\Vert
.\right\Vert _{u}$ and $\left\Vert .\right\Vert _{1}$ are equivalent.
\end{theorem}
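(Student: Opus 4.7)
The plan is to leverage two observations: by Theorem \ref{largest} we already have $\|.\|_u \leq \|.\|_1$, so the question reduces entirely to producing a reverse bound $\|.\|_1 \leq C\|.\|_u$; and the defining formula of $\|.\|_1$ naturally separates the ``scalar part'' $|\alpha|$ from a piece lying in $E$, which is precisely what lets closedness (an $|\alpha|$-control statement) interact with lattice-norm estimates.

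For the $(\Leftarrow)$ direction, assuming $\|.\|_u$ and $\|.\|_1$ are equivalent, closedness of $E$ is a topological property, so it suffices to show $E$ is closed in $\|.\|_1$. The key estimate comes directly from Theorem \ref{largest}: for any $x+\alpha \in E\oplus\mathbb{R}$ and any $y \in E$,
\[
\|(x+\alpha) - y\|_1 \;=\; \|(|(x-y)+\alpha| - |\alpha|)^{+}\| + |\alpha| \;\geq\; |\alpha|,
\]
so the $\|.\|_1$-distance from $x+\alpha$ to $E$ is at least $|\alpha|$, which forces $\alpha = 0$ whenever $x+\alpha$ belongs to the $\|.\|_1$-closure of $E$.

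For the $(\Rightarrow)$ direction, suppose $E$ is closed in $(E\oplus\mathbb{R}, \|.\|_u)$. Since $1 \notin E$ (Proposition \ref{ideal} says $E$ is a proper maximal ideal), the quantity $c := \inf\{\|z+1\|_u : z \in E\}$ is strictly positive. A quick scaling argument, using the invariance $\inf\{\|z-1\|_u : z\in E\} = c$ obtained by $z\mapsto -z$, yields $\|x+\alpha\|_u \geq c|\alpha|$ for every $x+\alpha$. Next, the element $w := (|x+\alpha| - |\alpha|)^{+}$ lies in $E$ by (\ref{in}), and satisfies $0 \leq w \leq |x+\alpha|$; since $\|.\|_u$ is a lattice norm extending $\|.\|$, I get
\[
\|w\| = \|w\|_u \leq \||x+\alpha|\|_u = \|x+\alpha\|_u.
\]
Plugging into the formula of Theorem \ref{largest},
\[
\|x+\alpha\|_1 \;=\; \|w\| + |\alpha| \;\leq\; \|x+\alpha\|_u + \frac{1}{c}\|x+\alpha\|_u \;=\; \left(1 + \frac{1}{c}\right)\|x+\alpha\|_u,
\]
which combined with $\|.\|_u \leq \|.\|_1$ gives the required equivalence.

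The main conceptual obstacle I anticipate is recognizing that $(|x+\alpha| - |\alpha|)^{+}$ is simultaneously an element of $E$ (via (\ref{in})) and dominated by $|x+\alpha|$ in $E\oplus\mathbb{R}$; once this is spotted, the lattice-norm axiom bridges $E$ and $E\oplus\mathbb{R}$, and everything else reduces to the scaling estimate $\|x+\alpha\|_u \geq c|\alpha|$, whose positivity of $c$ is exactly the content of closedness.
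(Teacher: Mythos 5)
Your proof is correct, and while it rests on the same underlying idea as the paper's (closedness forces a positive distance from the unit to $E$, which controls the scalar coefficient $|\alpha|$ by $\left\Vert x+\alpha\right\Vert _{u}$), the execution is genuinely different and cleaner in both directions. For $(\Rightarrow)$, the paper first reduces to positive elements $x+\alpha\geq 0$, uses the identity $\left(\left\vert x+\alpha\right\vert-\alpha\right)^{+}=x^{+}$ together with the membership $\frac{1}{\alpha}x^{-}\in E_{\ast}$, and obtains the constant $3/r$ with $r=\inf\left\{\left\Vert x-1\right\Vert_{u}:x\in E_{\ast}\right\}$; you instead treat arbitrary elements at once by observing that $w=\left(\left\vert x+\alpha\right\vert-\left\vert\alpha\right\vert\right)^{+}$ lies in $E$ by (\ref{in}) and satisfies $0\leq w\leq\left\vert x+\alpha\right\vert$, which yields the constant $1+\frac{1}{c}$ with $c$ the genuine $\left\Vert .\right\Vert _{u}$-distance from $-1$ to $E$; your route avoids the case reduction and gives a slightly sharper, more transparent bound. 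For $(\Leftarrow)$, the paper invokes the dense-or-closed dichotomy (via Proposition \ref{ideal} and the closure of an order ideal being an order ideal) and derives a contradiction from a sequence in $E$ converging to $1$, whereas you prove directly that the $\left\Vert .\right\Vert _{1}$-distance from $x+\alpha$ to $E$ is at least $\left\vert\alpha\right\vert$, so the closure of $E$ is $E$ itself; this is quantitative and dispenses with the dichotomy. Both arguments rely on Theorem \ref{largest} for the inequality $\left\Vert .\right\Vert _{u}\leq\left\Vert .\right\Vert _{1}$, and both use that a unitization norm restricts to $\left\Vert .\right\Vert$ on $E$; your version is a valid and arguably tidier alternative.
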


\begin{proof}
Suppose that $E$ is a closed set in $E\oplus\mathbb{R}$. Clearly,%
\[
0<r=\inf\left\{  \left\Vert x-1\right\Vert _{u}:x\in E_{\ast}\right\}  .
\]
Pick $x\in E_{\ast}$ and observe that $\left\vert x\right\vert \leq1$. We
derive that $0\leq-x+1\leq2$ and so $\left\Vert x-1\right\Vert _{u}\leq2$.
This yields that $0<r\leq2$. Now, let $x\in E$ and $\alpha\in\mathbb{R}$ such
that $x+\alpha\geq0$. In particular, $\alpha\geq0$. We claim that%
\[
r\left\Vert x+\alpha\right\Vert _{1}\leq3\left\Vert x+\alpha\right\Vert _{u}.
\]
The inequality being clear for $\alpha=0$, suppose that $\alpha>0$. We have%
\[
\left(  \left\vert x+\alpha\right\vert -\alpha\right)  ^{+}=x^{+}\vee\left(
-x-2\alpha\right)  =x^{+}.
\]
Consequently,%
\[
\left\Vert x+\alpha\right\Vert _{1}=\left\Vert \left(  \left\vert
x+\alpha\right\vert -\alpha\right)  ^{+}\right\Vert +\alpha=\left\Vert
x^{+}\right\Vert +\alpha.
\]
Moreover, $\frac{1}{\alpha}x^{-}\in E^{\ast}$ and so $\left\Vert \frac
{1}{\alpha}x^{-}-1\right\Vert _{u}\geq r$. We get $\left\Vert x^{-}%
-\alpha\right\Vert _{u}\geq\alpha r$ from which it follows that%
\[
r\left\Vert x+\alpha\right\Vert _{1}=r\left\Vert x^{+}\right\Vert +\alpha
r\leq r\left\Vert x^{+}\right\Vert +\left\Vert x^{-}-\alpha\right\Vert
_{u}\leq2\left\Vert x^{+}\right\Vert +\left\Vert x^{-}-\alpha\right\Vert
_{u}.
\]
Since $x^{-}-\alpha\leq0$, we obtain%
\[
r\left\Vert x+\alpha\right\Vert _{1}\leq2\left\Vert x+\alpha\right\Vert
_{u}+\left\Vert x+\alpha\right\Vert _{u}=3\left\Vert x+\alpha\right\Vert
_{u},
\]
as desired. In view of Theorem \ref{largest}, we conclude that $\left\Vert
.\right\Vert _{u}$ and $\left\Vert .\right\Vert _{1}$ are equivalent.

Conversely, it suffices to show that $E$ is a closed set in $\left(
E\oplus\mathbb{R},\left\Vert .\right\Vert _{1}\right)  $. Otherwise, $E$ would
be dense in $E\oplus\mathbb{R}$. In particular, it would exist a sequence
$\left(  x_{n}\right)  $ in $E$ such that%
\[
\lim x_{n}=1\text{ in }\left(  E\oplus\mathbb{R},\left\Vert .\right\Vert
_{1}\right)  .
\]
But this is impossible since if $x\in E$ then%
\[
\left\Vert x-1\right\Vert _{1}=\left\Vert \left(  \left\vert x-1\right\vert
-1\right)  ^{+}\right\Vert +1\geq1.
\]
This completes the proof of the theorem.
\end{proof}

We have seen in Theorem \ref{closed} that any truncated normed Riesz space $E$
is a closed set in $\left(  E\oplus\mathbb{R},\left\Vert .\right\Vert
_{1}\right)  $. Nevertheless, the picture is much less clear for the norm
$\left\Vert .\right\Vert _{0}$ as the following examples illustrate.

\begin{example}
As usual, let $C\left(  X\right)  $ denote the Riesz space of all real-valued
continuous functions on a topological space $X$.

\begin{enumerate}
\item[\emph{(i)}] Clearly, the set%
\[
E=\left\{  f\in C\left(  \left[  -1,1\right]  \right)  :f\left(  -1\right)
=f\left(  1\right)  =0\right\}
\]
is a Riesz subspace of $C\left(  \left[  -1,1\right]  \right)  $. Moreover,
the formula%
\[
\left\Vert f\right\Vert =\frac{1}{2}\int_{-1}^{1}\left\vert f\left(  s\right)
\right\vert ds,\text{ for all }f\in E
\]
defines a lattice norm on $E$. On the other hand, observe that the equality%
\[
f^{\ast}\left(  r\right)  =\min\left\{  1,f\left(  r\right)  \right\}  ,\text{
for all }f\in E\text{ and }r\in\left[  -1,1\right]
\]
makes $E$ into a truncated normed Riesz space with no truncation unit.
Consider now the unitization norm on $E\oplus\mathbb{R}$ defined by%
\[
\left\Vert f+\alpha\right\Vert _{u}=\frac{1}{2}\int_{-1}^{1}\left\vert
f\left(  s\right)  +\alpha\right\vert ds,\text{ for all }f\in E
\]
Furthermore, if $n\in\left\{  2,3,...\right\}  $ we define $f_{n}\in E$ by
putting%
\[
f_{n}\left(  r\right)  =\left\{
\begin{array}
[c]{l}%
n\left(  r+1\right)  \text{ if }r\in\left[  -1,-1+\frac{1}{n}\right] \\
\\
1\text{ if }r\in\left(  -1+\frac{1}{n},1-\frac{1}{n}\right) \\
\\
n\left(  1-r\right)  \text{ if }r\in\left[  1-\frac{1}{n},1\right]  .
\end{array}
\right.
\]
A simple calculation yields that%
\[
\left\Vert f_{n}-1\right\Vert _{u}=\frac{1}{n}\text{, for all }n\in\left\{
2,3,...\right\}  .
\]
This together with \emph{Theorem \ref{smallest}} shows that $1$ belongs to the
closure of $E$ in $\left(  E\oplus\mathbb{R},\left\Vert .\right\Vert
_{0}\right)  $. Accordingly, $E$ is not closed in $\left(  E\oplus
\mathbb{R},\left\Vert .\right\Vert _{0}\right)  $, so $E$ must be dense in
$\left(  E\oplus\mathbb{R},\left\Vert .\right\Vert _{0}\right)  $.

\item[\emph{(ii)}] The set $E$ of all functions in $C\left(  \mathbb{R}%
\right)  $ with compact support is a Riesz subspace of $C\left(
\mathbb{R}\right)  $. The uniform norm defined on $E$ by%
\[
\left\Vert f\right\Vert =\sup\left\{  \left\vert f\left(  r\right)
\right\vert :r\in\mathbb{R}\right\}  ,\text{ for all }f\in E
\]
is a lattice norm on $E$. It is an easy exercise to check that $E$ is a
truncated normed Riesz space under the truncation given by%
\[
f^{\ast}\left(  r\right)  =\min\left\{  1,f\left(  r\right)  \right\}  ,\text{
for all }f\in E\text{ and }r\in\mathbb{R}.
\]
Also, we can easily show that $E$ has no truncation unit. Now, let $f\in E$
such that%
\[
0\leq f\left(  r\right)  \leq1,\text{ for all }r\in\mathbb{R}%
\]
and pick $a\in\left(  0,\infty\right)  $ such that the support of $f$ is
included in the real interval $\left[  -a,a\right]  $. Define $g\in E$ by
putting%
\[
g\left(  r\right)  =\left\{
\begin{array}
[c]{l}%
f\left(  r\right)  \text{ if }r\in\left(  -\infty,a\right) \\
\\
r-a\text{ if }r\in\left[  a,a+1\right) \\
\\
a+2-r\text{ if }r\in\left[  a+1,a+2\right) \\
\\
0\text{ if }r\in\left[  a+2,\infty\right)  .
\end{array}
\right.
\]
Clearly, we have%
\[
0\leq f\left(  r\right)  \leq g\left(  r\right)  \leq1,\text{ for all }%
r\in\mathbb{R}.
\]
It follows that%
\[
\left\Vert -f+1\right\Vert _{0}=\sup\left\{  \left\Vert h\right\Vert
:\left\vert h\right\vert \leq-f+1\right\}  \geq\left\Vert g\right\Vert =1.
\]
We derive that the closure of $E$ in $\left(  E\oplus\mathbb{R},\left\Vert
.\right\Vert _{0}\right)  $ does not contain $1$. In particular, $E$ is not
dense in $\left(  E\oplus\mathbb{R},\left\Vert .\right\Vert _{0}\right)  $,
meaning that $E\ $is a closed set in $\left(  E\oplus\mathbb{R},\left\Vert
.\right\Vert _{0}\right)  $.
\end{enumerate}
\end{example}

Hence, unlike the $\left\Vert .\right\Vert _{1}$ case, we cannot decide wether
$E$ is dense or a closed set in $\left(  E\oplus\mathbb{R},\left\Vert
.\right\Vert _{0}\right)  $. In spite of that, we have the following
\textquotedblleft dense\textquotedblright\ version of Theorem \ref{closed},
which is the last result of this section.

\begin{theorem}
\label{dense}Let $E$ be a truncated normed Riesz space and $\left\Vert
.\right\Vert _{u}$ be a unitization norm on $E\oplus\mathbb{R}$. If $E$ is
dense in $\left(  E\oplus\mathbb{R},\left\Vert .\right\Vert _{u}\right)  $
then $E$ has no truncation unit and $\left\Vert .\right\Vert _{u}=\left\Vert
.\right\Vert _{0}$.
\end{theorem}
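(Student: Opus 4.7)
The argument splits into two pieces, both built from the same pair of tools: density gives approximants in $E$, and the maximal-ideal property from Proposition \ref{ideal} keeps certain lattice-meets inside $E$, where $\left\Vert .\right\Vert _{u}$ coincides with the original $\left\Vert .\right\Vert $.

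For the first assertion, suppose for contradiction that $E$ admits a truncation unit $e$. Lemma \ref{unit} then yields $0<1-e$ in $E\oplus\mathbb{R}$ and $\left\vert x\right\vert \wedge(1-e)=0$ for every $x\in E$. By density, pick a sequence $\left(x_{n}\right)\subset E$ with $x_{n}\rightarrow1-e$ in $\left\Vert .\right\Vert _{u}$. The lattice-norm property forces $\left\vert x_{n}\right\vert \rightarrow 1-e$, and Birkhoff's inequality $\bigl\vert a\wedge c-b\wedge c\bigr\vert \leq\left\vert a-b\right\vert $ then yields $\left\vert x_{n}\right\vert \wedge(1-e)\rightarrow\left( 1-e\right) \wedge\left( 1-e\right) =1-e$ in $\left\Vert .\right\Vert _{u}$. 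But the left-hand side is identically zero, so $\left\Vert 1-e\right\Vert _{u}=0$, contradicting $1-e\neq 0$ (equivalently, $1\notin E$).

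Knowing that $E$ has no truncation unit, Theorem \ref{smallest} produces $\left\Vert .\right\Vert _{0}$ as a genuine unitization norm, and its minimality gives $\left\Vert .\right\Vert _{0}\leq\left\Vert .\right\Vert _{u}$ automatically. For the reverse inequality, fix $x+\alpha\in E\oplus\mathbb{R}$ and use density to select $y_{n}\in E$ with $y_{n}\rightarrow x+\alpha$ in $\left\Vert .\right\Vert _{u}$. Set $z_{n}:=\left\vert y_{n}\right\vert \wedge\left\vert x+\alpha\right\vert $. Since $E$ is an order ideal and $0\leq z_{n}\leq\left\vert y_{n}\right\vert \in E$, each $z_{n}$ lies in $E$; combined with $z_{n}\leq\left\vert x+\alpha\right\vert $ and the defining supremum of Theorem \ref{smallest}, this gives $\left\Vert z_{n}\right\Vert \leq\left\Vert x+\alpha\right\Vert _{0}$. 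On the other hand $\left\vert y_{n}\right\vert \rightarrow\left\vert x+\alpha\right\vert $ in $\left\Vert .\right\Vert _{u}$, so Birkhoff again produces $z_{n}\rightarrow\left\vert x+\alpha\right\vert $ in $\left\Vert .\right\Vert _{u}$. Since $\left\Vert z_{n}\right\Vert _{u}=\left\Vert z_{n}\right\Vert $ for $z_{n}\in E$, passing to the limit yields $\left\Vert x+\alpha\right\Vert _{u}=\left\Vert \left\vert x+\alpha\right\vert \right\Vert _{u}\leq\left\Vert x+\alpha\right\Vert _{0}$, as required.

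No step should be a genuine obstacle: the real content is the observation that lattice meets of dense approximants with fixed targets sit inside the ideal $E$, which is precisely what converts $\left\Vert .\right\Vert _{u}$-convergence into information about the original $\left\Vert .\right\Vert $. The one point requiring minimal care is continuity of $\wedge$ under a lattice norm, but that is exactly Birkhoff's inequality and is used twice in the same way.
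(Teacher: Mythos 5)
Your proof is correct. The first half (no truncation unit) is essentially the paper's argument: approximate $1-e$ by elements of $E$, note that Lemma \ref{unit} kills all the meets $\left\vert x_{n}\right\vert \wedge\left(1-e\right)$, and use norm-continuity of the lattice operations (Birkhoff) to force $\left\Vert 1-e\right\Vert _{u}=0$, hence $1=e\in E$, a contradiction. The second half differs in mechanism. The paper gets $\left\Vert \cdot\right\Vert _{u}=\left\Vert \cdot\right\Vert _{0}$ by a reverse-triangle-inequality squeeze: with $x_{n}\rightarrow x+\alpha$ it bounds $\left\vert \left\Vert x+\alpha\right\Vert _{0}-\left\Vert x+\alpha\right\Vert _{u}\right\vert \leq\left\Vert x-x_{n}+\alpha\right\Vert _{0}+\left\Vert x-x_{n}+\alpha\right\Vert _{u}\leq2\left\Vert x-x_{n}+\alpha\right\Vert _{u}$, using $\left\Vert x_{n}\right\Vert _{0}=\left\Vert x_{n}\right\Vert _{u}=\left\Vert x_{n}\right\Vert $ and the minimality $\left\Vert \cdot\right\Vert _{0}\leq\left\Vert \cdot\right\Vert _{u}$. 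You instead manufacture explicit witnesses $z_{n}=\left\vert y_{n}\right\vert \wedge\left\vert x+\alpha\right\vert \in E$ for the supremum defining $\left\Vert x+\alpha\right\Vert _{0}$: they satisfy $\left\Vert z_{n}\right\Vert \leq\left\Vert x+\alpha\right\Vert _{0}$ by construction and $\left\Vert z_{n}\right\Vert _{u}\rightarrow\left\Vert x+\alpha\right\Vert _{u}$ by Birkhoff, giving the reverse inequality directly. Both are short; the paper's version is slightly more economical (no appeal to Proposition \ref{ideal} or to continuity of $\wedge$ in this half), while yours makes transparent \emph{why} the supremum in Theorem \ref{smallest} is attained in the limit under density, which is arguably the more structural explanation.
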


\begin{proof}
Suppose that $E$ is dense in $\left(  E\oplus\mathbb{R},\left\Vert
.\right\Vert _{u}\right)  $. Arguing by contradiction, we assume that $E$ has
a truncation unit $e$. By density, there exists a sequence $\left(
x_{n}\right)  $ in $E$ such that%
\[
\lim\left\vert x_{n}-e\right\vert =1-e\text{ in }\left(  E\oplus
\mathbb{R},\left\Vert .\right\Vert _{u}\right)  .
\]
However, using Lemma \ref{unit}, we can write%
\[
\left\vert x_{n}-e\right\vert \wedge\left(  1-e\right)  =0,\text{ for all
}n\in\left\{  1,2,...\right\}  .
\]
This leads directly to the contraction $1=e\in E$ and proves that $E$ does not
have a truncation unit. Now, we claim that $\left\Vert .\right\Vert
_{u}=\left\Vert .\right\Vert _{0}$. Let $x\in E$ and $\alpha\in\mathbb{R}$.
Since $E$ is dense in $\left(  E\oplus\mathbb{R},\left\Vert .\right\Vert
_{u}\right)  $, there exists a sequence $\left(  x_{n}\right)  $ in $E$ such
that%
\[
\lim x_{n}=x+\alpha\text{ in }\left(  E\oplus\mathbb{R},\left\Vert
.\right\Vert _{u}\right)  .
\]
Thus, by Theorem \ref{smallest}, if $n\in\left\{  1,2,...\right\}  $ then%
\begin{align*}
\left\vert \left\Vert x+\alpha\right\Vert _{0}-\left\Vert x+\alpha\right\Vert
_{u}\right\vert  &  \leq\left\vert \left\Vert x+\alpha\right\Vert
_{0}-\left\Vert x_{n}\right\Vert \right\vert +\left\vert \left\Vert
x_{n}\right\Vert -\left\Vert x+\alpha\right\Vert _{u}\right\vert \\
&  \leq\left\Vert x-x_{n}+\alpha\right\Vert _{0}+\left\Vert x-x_{n}%
+\alpha\right\Vert _{u}\\
&  \leq2\left\Vert x-x_{n}+\alpha\right\Vert _{u}.
\end{align*}
It follows quickly that $\left\Vert x+\alpha\right\Vert _{0}-\left\Vert
x+\alpha\right\Vert _{u}=0$ and the theorem follows.
\end{proof}

\section{Unitization of truncated Banach lattice}

We start our investigation by the following technical lemma.

\begin{lemma}
\label{classic}Let $E$ be a truncated normed Riesz space and $x+\alpha\in
E\oplus\mathbb{R}$. Then%
\[
\frac{\left\Vert x\right\Vert +\left\vert \alpha\right\vert }{3}\leq\left\Vert
x+\alpha\right\Vert _{1}\leq3\left(  \left\Vert x\right\Vert +\left\vert
\alpha\right\vert \right)  .
\]

\end{lemma}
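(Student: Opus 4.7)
The plan is to prove the two inequalities separately; both reduce to elementary manipulations, and the upper bound will in fact emerge in the sharper form $\Vert x+\alpha\Vert_{1}\leq\Vert x\Vert+\vert\alpha\vert$, from which the stated factor of $3$ is trivial.

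For the upper bound, I will start from the Riesz-space triangle inequality $\vert x+\alpha\vert\leq\vert x\vert+\vert\alpha\vert$ valid in $E\oplus\mathbb{R}$, rearranged as $\vert x+\alpha\vert-\vert\alpha\vert\leq\vert x\vert$. Taking positive parts preserves this, and by (\ref{in}) combined with the fact that $E$ is a Riesz subspace of $E\oplus\mathbb{R}$, the element $(\vert x+\alpha\vert-\vert\alpha\vert)^{+}$ actually lies in $E$. The monotonicity of the lattice norm $\Vert\cdot\Vert$ then gives $\Vert(\vert x+\alpha\vert-\vert\alpha\vert)^{+}\Vert\leq\Vert x\Vert$, and adding $\vert\alpha\vert$ yields $\Vert x+\alpha\Vert_{1}\leq\Vert x\Vert+\vert\alpha\vert\leq 3(\Vert x\Vert+\vert\alpha\vert)$.

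For the lower bound, I will combine two crude estimates. The first, $\Vert x+\alpha\Vert_{1}\geq\vert\alpha\vert$, is immediate from the defining formula since $\Vert(\vert x+\alpha\vert-\vert\alpha\vert)^{+}\Vert\geq 0$. The second uses that $\Vert\cdot\Vert_{1}$, being a unitization norm, extends $\Vert\cdot\Vert$ on $E$ and satisfies $\Vert 1\Vert_{1}=1$, hence $\Vert{-\alpha}\Vert_{1}=\vert\alpha\vert$; applying the triangle inequality for $\Vert\cdot\Vert_{1}$ to the decomposition $x=(x+\alpha)+(-\alpha)$ gives $\Vert x\Vert=\Vert x\Vert_{1}\leq\Vert x+\alpha\Vert_{1}+\vert\alpha\vert$, i.e., $\Vert x+\alpha\Vert_{1}\geq\Vert x\Vert-\vert\alpha\vert$. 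Summing these two estimates yields $2\Vert x+\alpha\Vert_{1}\geq\Vert x\Vert$, and adding $\Vert x+\alpha\Vert_{1}\geq\vert\alpha\vert$ one more time produces $3\Vert x+\alpha\Vert_{1}\geq\Vert x\Vert+\vert\alpha\vert$, as required.

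There is no genuine obstacle here; the only point requiring minor care is verifying that $(\vert x+\alpha\vert-\vert\alpha\vert)^{+}$ belongs to $E$ before invoking the lattice norm $\Vert\cdot\Vert$ on it, which is precisely the content of (\ref{in}) combined with $E$ being a Riesz subspace of $E\oplus\mathbb{R}$. The constant $3$ is not sharp on either side, but it is adequate for the equivalence of $\Vert\cdot\Vert_{1}$ with the natural product norm, which is presumably the intended use of this lemma.
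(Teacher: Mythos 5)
Your proof is correct, but it takes a different route from the paper's. The paper works directly from the explicit formula (\ref{av}) for $\left\vert x+\alpha\right\vert$, writing $\left\Vert x+\alpha\right\Vert _{1}=\bigl\Vert \bigl[\left\vert x\right\vert -2\left\vert \alpha\right\vert \bigl(\frac{x^{-}}{\alpha}\wedge\frac{-x^{+}}{\alpha}\bigr)^{\ast}\bigr]^{+}\bigr\Vert +\left\vert \alpha\right\vert$ and then exploiting the blanket bound $\left\Vert y^{\ast}\right\Vert \leq1$ to control the truncation term in both directions; this yields the lower bound via $\left\Vert x\right\Vert \leq2\left\Vert x+\alpha\right\Vert _{1}$ and the upper bound in the form $\left\Vert x+\alpha\right\Vert _{1}\leq\left\Vert x\right\Vert +3\left\vert \alpha\right\vert$. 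You avoid unpacking (\ref{av}) entirely: your upper bound uses only the Riesz-space triangle inequality $\left\vert x+\alpha\right\vert \leq\left\vert x\right\vert +\left\vert \alpha\right\vert$ together with (\ref{in}) and monotonicity of the lattice norm, and your lower bound recycles the norm axioms of $\left\Vert .\right\Vert _{1}$ already established in Theorem \ref{largest} (the step $\left\Vert x\right\Vert \leq\left\Vert x+\alpha\right\Vert _{1}+\left\vert \alpha\right\vert$). Your argument is cleaner, does not use the blanket normalization $\sup\{\left\Vert y^{\ast}\right\Vert :y\in E^{+}\}=1$, and gives the sharper estimate $\left\Vert x+\alpha\right\Vert _{1}\leq\left\Vert x\right\Vert +\left\vert \alpha\right\vert$; the paper's computation, by contrast, is self-contained at the level of the formula for the absolute value and makes visible exactly where the norm-boundedness of the truncation enters. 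Both lower bounds land on the same constant $3$, so nothing is lost either way for the intended application (equivalence of $\left\Vert .\right\Vert _{1}$ with $\left\Vert .\right\Vert _{c}$ in Theorem \ref{Banach}).
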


\begin{proof}
The inequalities are obvious if $\alpha=0$. So, suppose that $\alpha\neq0$.
Hence, by (\ref{av}), we have%
\[
\left\Vert x+\alpha\right\Vert _{1}=\left\Vert \left\vert x+\alpha\right\vert
\right\Vert _{1}=\left\Vert \left[  \left\vert x\right\vert -2\left\vert
\alpha\right\vert \left(  \frac{x^{-}}{\alpha}\wedge\frac{-x^{+}}{\alpha
}\right)  ^{\ast}\right]  ^{+}\right\Vert +\left\vert \alpha\right\vert
\]
Thus, $\left\vert \alpha\right\vert \leq\left\Vert x+\alpha\right\Vert _{1}$.
Also,%
\begin{align*}
\left\Vert x\right\Vert  &  =\left\Vert \left\vert x\right\vert \right\Vert
\leq\left\Vert \left[  \left\vert x\right\vert -2\left\vert \alpha\right\vert
\left(  \frac{x^{-}}{\alpha}\wedge\frac{-x^{+}}{\alpha}\right)  ^{\ast
}\right]  ^{+}\right\Vert +\left\Vert 2\left\vert \alpha\right\vert \left(
\frac{x^{-}}{\alpha}\wedge\frac{-x^{+}}{\alpha}\right)  ^{\ast}\right\Vert \\
&  \leq\left\Vert \left[  \left\vert x\right\vert -2\left\vert \alpha
\right\vert \left(  \frac{x^{-}}{\alpha}\wedge\frac{-x^{+}}{\alpha}\right)
^{\ast}\right]  ^{+}\right\Vert +2\left\vert \alpha\right\vert \leq2\left\Vert
x+\alpha\right\Vert _{1}%
\end{align*}
Therefore,%
\[
\left\Vert x\right\Vert +\left\vert \alpha\right\vert \leq3\left\Vert
x+\alpha\right\Vert _{1}%
\]
and the first inequality follows. Now, we write%
\begin{align*}
\left\Vert x+\alpha\right\Vert _{1}  &  =\left\Vert \left[  \left\vert
x\right\vert -2\left\vert \alpha\right\vert \left(  \frac{x^{-}}{\alpha}%
\wedge\frac{-x^{+}}{\alpha}\right)  ^{\ast}\right]  ^{+}\right\Vert
+\left\vert \alpha\right\vert \\
&  \leq\left\Vert x\right\Vert +2\left\vert \alpha\right\vert \left\Vert
\left(  \frac{x^{-}}{\alpha}\wedge\frac{-x^{+}}{\alpha}\right)  ^{\ast
}\right\Vert +\left\vert \alpha\right\vert \\
&  \leq\left\Vert x\right\Vert +3\left\vert \alpha\right\vert \leq3\left(
\left\Vert x\right\Vert +\left\vert \alpha\right\vert \right)  .
\end{align*}
This leads to the second inequality.
\end{proof}

It is well-known that if $E$ is a Banach space, then the formula%
\[
\left\Vert x+\alpha\right\Vert _{c}=\left\Vert x\right\Vert +\left\vert
\alpha\right\vert ,\text{ for all }x\in E\text{ and }\alpha\in\mathbb{R}%
\]
defines a norm on $E\oplus\mathbb{R}$ under which $E\oplus\mathbb{R}$ is a
Banach space \cite{BD1973}. This classical result is a key step in the proof
of the following theorem.

\begin{theorem}
\label{Banach}Let $E$ be truncated normed Riesz space and suppose that
$E\oplus\mathbb{R}$ is equipped with any unitization norm. Then, $E$ is a
Banach lattice if and only if $E\oplus\mathbb{R}$ is a Banach lattice.
\end{theorem}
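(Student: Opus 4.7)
The plan is to reduce both implications to Theorem \ref{closed} together with Lemma \ref{classic}, so the whole argument hinges on showing that $E$ is a closed subspace of $(E\oplus\mathbb{R},\|\cdot\|_u)$ in each case. Recall from the discussion just before the theorem that the classical norm $\|x+\alpha\|_c=\|x\|+|\alpha|$ makes $E\oplus\mathbb{R}$ a Banach space exactly when $E$ is one, and that by Lemma \ref{classic} we have $\|\cdot\|_c\sim\|\cdot\|_1$ on $E\oplus\mathbb{R}$.

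For the implication ``$E$ Banach lattice $\Rightarrow E\oplus\mathbb{R}$ Banach lattice'', since $\|\cdot\|_u$ extends $\|\cdot\|$ on $E$, completeness of $(E,\|\cdot\|)$ makes $E$ a complete, hence closed, subspace of $(E\oplus\mathbb{R},\|\cdot\|_u)$. Theorem \ref{closed} then forces $\|\cdot\|_u$ and $\|\cdot\|_1$ to be equivalent, and combining with Lemma \ref{classic} we see $\|\cdot\|_u\sim\|\cdot\|_c$. Completeness of $(E\oplus\mathbb{R},\|\cdot\|_c)$ (the classical unitization of a Banach space) therefore transfers to $(E\oplus\mathbb{R},\|\cdot\|_u)$, and since this is already a normed Riesz space it is a Banach lattice.

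The converse is the heart of the matter, and the key idea is to introduce the projection $q\colon E\oplus\mathbb{R}\to\mathbb{R}$ defined by $q(x+\alpha)=\alpha$. It is obviously linear, and the observation recorded just after formula~(\ref{av}) that $x+\alpha\ge 0$ implies $\alpha\ge 0$ is exactly the statement that $q$ is positive. Assuming $(E\oplus\mathbb{R},\|\cdot\|_u)$ is a Banach lattice, the classical fact that every positive linear operator from a Banach lattice into a normed Riesz space is norm-bounded (see, e.g., \cite[Theorem~4.3]{AB2006}) shows that $q$ is continuous. Consequently $E=\ker q$ is closed in $(E\oplus\mathbb{R},\|\cdot\|_u)$, so $(E,\|\cdot\|)$ is a closed subspace of a Banach space and hence itself a Banach lattice.

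The main obstacle, should one wish to avoid invoking the boundedness theorem for positive operators, would be to rule out directly that $E$ is dense in a complete $(E\oplus\mathbb{R},\|\cdot\|_u)$, using only Theorem \ref{dense} and the explicit shape of $\|\cdot\|_0$; this path looks substantially harder, so the positivity trick above is the natural route.
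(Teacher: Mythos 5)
Your proof is correct, and the forward implication is exactly the paper's argument: completeness of $(E,\|\cdot\|)$ makes $E$ closed in $(E\oplus\mathbb{R},\|\cdot\|_u)$, Theorem \ref{closed} gives $\|\cdot\|_u\sim\|\cdot\|_1$, and Lemma \ref{classic} transfers completeness from the classical norm $\|\cdot\|_c$. Where you genuinely diverge is the converse. The paper gets closedness of $E$ by quoting the fact that every maximal order ideal of a Banach lattice is norm closed (Corollary 3, p.~85 of \cite{S1974}), combined with Proposition \ref{ideal}; you instead exhibit $E$ as the kernel of the positive linear functional $q(x+\alpha)=\alpha$ (positivity being exactly the remark after (\ref{av})) and invoke automatic continuity of positive operators on Banach lattices. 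The two routes are close cousins --- the closedness of maximal order ideals is usually itself deduced from continuity of positive functionals --- but yours is self-contained modulo a single standard theorem and does not need the maximality assertion of Proposition \ref{ideal}, only that $q$ is positive; the paper's version, by contrast, reuses machinery it has already set up. Both are complete proofs, and your closing remark about avoiding the boundedness theorem is correctly flagged as unnecessary rather than a gap.
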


\begin{proof}
Let $\left\Vert .\right\Vert _{u}$ the unitization norm on $E\oplus\mathbb{R}%
$. If $E\oplus\mathbb{R}$ is a Banach lattice then so is $E$. Indeed, being a
maximal ideal in $E\oplus\mathbb{R}$, $E$ is a closed set in $E\oplus
\mathbb{R}$ (see Corollary 3, Page 85 in \cite{S1974}). Conversely, assume
that $E$ is a Banach lattice. Then $E$ is a closed set in $E\oplus\mathbb{R}$.
By Theorem \ref{closed}, the norms $\left\Vert .\right\Vert _{u}$ and
$\left\Vert .\right\Vert _{1}$ are equivalent. Using Lemma \ref{classic}, we
infer that $\left\Vert .\right\Vert _{u}$ and $\left\Vert .\right\Vert _{c}$
are equivalent. But then $\left(  E\oplus\mathbb{R},\left\Vert .\right\Vert
_{u}\right)  $ is complete because so is $\left(  E\oplus\mathbb{R},\left\Vert
.\right\Vert _{c}\right)  $. This ends the proof of the corollary.
\end{proof}

Alluding to completeness, the following observation is worth mentioning. Let
$E$ be a truncated normed Riesz space. The norm completion $\overline{E}$ of
$E$ is Banach lattice with $E$ as a Riesz subspace \cite[Theorem 2.4]{AB2006}.
Applying the classical Birkhoff's Inequality \cite[Theorem 1.9]{AB2006} in the
Riesz space $E\oplus\mathbb{R}$, we derive that if $x,y\in E^{+}$ then%
\[
\left\vert x^{\ast}-y^{\ast}\right\vert =\left\vert 1\wedge x-1\wedge
y\right\vert \leq\left\vert x-y\right\vert .
\]
It follows that $\left\Vert x^{\ast}-y^{\ast}\right\Vert \leq\left\Vert
x-y\right\Vert $. We deduce that the truncation on $E$ is uniformly
continuous, so it extends uniquely to a truncation on $\overline{E}$ in such a
way that $\overline{E}$ becomes a truncated Banach lattice.

We end this paper with the following representation theorem which can be seen
as an extension of the famous Kakutani's representation theorem \cite[Theorem
3.6]{AA2002}. Recall that if $E$ is a Banach lattice with a strong unit $e>0$
then the gauge function given by%
\[
\left\Vert x\right\Vert _{\infty}=\inf\left\{  \lambda\in\left(
0,\infty\right)  :\left\vert x\right\vert <\lambda\right\}  ,\text{ for all
}x\in E
\]
makes $E$ into a unital $AM$-space which is, by the aforementioned Kakutani's
result, lattice isomorphic with $C\left(  K\right)  $ for some compact
Hausdorff space $K$. By the way, the Banach lattice of all real-valued
continuous functions on a locally compact Hausdorff space $X$ vanishing at
infinity is denoted by $C_{0}\left(  X\right)  $. We are in position now to
prove the last result of this paper. But recall first that $E_{\ast}=\left\{
x\in E:\left\vert x\right\vert ^{\ast}=\left\vert x\right\vert \right\}  $,
where $E$ is a truncated Riesz space.

\begin{corollary}
\label{C0}Let $E$ be a truncated Banach lattice such that if $x\in E^{+}$ then
$\mu x\in E_{\ast}$ for some $\mu\in\left(  0,\infty\right)  $. The formula%
\[
\left\Vert x\right\Vert _{\infty}=\inf\left\{  \lambda\in\left(
0,\infty\right)  :\frac{1}{\lambda}x\in E_{\ast}\right\}
\]
defines a lattice norm on $E$ under which $E$ is lattice isometric to
$C_{0}\left(  X\right)  $ for some locally compact Hausdorff space $X$.
\end{corollary}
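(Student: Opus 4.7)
The plan is to recognise the unitization $E \oplus \mathbb{R}$ as an AM-space with strong unit $1$, apply the classical Kakutani representation theorem to identify it isometrically with some $C(K)$, and then use Proposition \ref{ideal} to locate $E$ as the maximal ideal of functions vanishing at a single point of $K$, which is precisely $C_{0}(X)$ for $X := K \setminus \{k_{0}\}$.

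First I verify that $1$ is a strong order unit of $E \oplus \mathbb{R}$: for $z = x + \alpha$, the triangle inequality together with the hypothesis that $\mu |x| \in E_{\ast}$ for some $\mu > 0$ gives $|z| \leq (\mu^{-1} + |\alpha|) \cdot 1$. Consequently the gauge
\[
p(z) := \inf\{\lambda > 0 : |z| \leq \lambda \cdot 1\}
\]
is well-defined on $E \oplus \mathbb{R}$, and routine verifications (using that $E \oplus \mathbb{R}$ is Archimedean, inherited from the Banach lattice $E$) show that $p$ is a lattice norm with $p(1)=1$ and with the M-property $p(u \vee v) = \max(p(u),p(v))$ for $u,v \geq 0$. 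Its restriction to $E$ is exactly the $\|.\|_{\infty}$ of the statement, because for $x \in E$ one has $x/\lambda \in E_{\ast}$ iff $|x|/\lambda \leq 1$ in $E \oplus \mathbb{R}$.

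The main technical step is completeness. The blanket assumption $\sup\{\|y\| : y \in E_{\ast}\} = 1$ immediately gives $\|x\| \leq \|x\|_{\infty}$ on $E$, so any $\|.\|_{\infty}$-Cauchy sequence is $\|.\|$-Cauchy and converges in the original norm to some $x \in E$. To upgrade to $\|.\|_{\infty}$-convergence I use that $E_{\ast}$ is $\|.\|$-closed: since the truncation is non-expansive (the paper notes $\|y^{\ast} - z^{\ast}\| \leq \|y - z\|$), the map $y \mapsto |y|^{\ast} - |y|$ is $\|.\|$-continuous and $E_{\ast}$ is its zero set. For $\epsilon > 0$ and $n$ large, $(x_{n} - x_{m})/\lambda \in E_{\ast}$ for all $\lambda > \epsilon$ and large $m$; letting $m \to \infty$ in $\|.\|$ and invoking this closedness yields $(x_{n} - x)/\lambda \in E_{\ast}$, whence $\|x_{n} - x\|_{\infty} \leq \epsilon$. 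A short argument using the canonical quotient map $\pi : E \oplus \mathbb{R} \to \mathbb{R}$, which is a lattice homomorphism because $E$ is an ideal (Proposition \ref{ideal}) and therefore satisfies $|\alpha| = |\pi(z)| = \pi(|z|) \leq \lambda$ whenever $|z| \leq \lambda \cdot 1$, produces the estimate $|\alpha| \leq p(x + \alpha)$; combined with the completeness of $(E, \|.\|_{\infty})$ and $p(x_{n} - x_{m}) \leq p(z_{n} - z_{m}) + |\alpha_{n} - \alpha_{m}|$, this promotes every $p$-Cauchy sequence $z_{n} = x_{n} + \alpha_{n}$ to a convergent one in $(E \oplus \mathbb{R}, p)$.

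At this stage $(E \oplus \mathbb{R}, p)$ is a unital AM-space, so Kakutani's theorem (\cite[Theorem 3.6]{AA2002}) supplies a surjective lattice isometry onto $C(K)$ for some compact Hausdorff $K$, carrying $1$ to the constant function $1_{K}$. By Proposition \ref{ideal} the image of $E$ is a maximal order ideal of $C(K)$, necessarily of the form $\{f \in C(K) : f(k_{0}) = 0\}$ for some $k_{0} \in K$, which in turn is lattice-isometric to $C_{0}(X)$ with $X := K \setminus \{k_{0}\}$ locally compact Hausdorff; restricting the Kakutani isometry to $E$ yields the required lattice isometry $(E, \|.\|_{\infty}) \cong C_{0}(X)$. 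The principal obstacle in this plan is the completeness transfer: the crucial input is the $\|.\|$-closedness of $E_{\ast}$ extracted from the non-expansiveness of the truncation, which is what lets the completeness of $(E, \|.\|)$ pass to the new gauge norm.
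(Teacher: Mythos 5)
Your proposal is correct and follows the same overall skeleton as the paper: pass to the unitization $E\oplus\mathbb{R}$, check that $1$ is a strong unit using the hypothesis $\mu\left\vert x\right\vert\in E_{\ast}$, renorm with the gauge of $1$ to obtain a unital $AM$-space, apply Kakutani to get $C\left(K\right)$, and then use Proposition \ref{ideal} to identify $E$ with the maximal ideal $M_{k_{0}}$, hence with $C_{0}\left(K\setminus\{k_{0}\}\right)$. The one place where you genuinely diverge is the completeness of the gauge norm. The paper disposes of this by invoking Theorem \ref{Banach} (so that $E\oplus\mathbb{R}$ is a Banach lattice under any unitization norm, which rests on Theorem \ref{closed} and Lemma \ref{classic}) and then citing the classical fact that the gauge of a strong unit in a Banach lattice yields a unital $AM$-space. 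You instead prove completeness of the gauge directly: the blanket normalization gives $\left\Vert x\right\Vert\leq\left\Vert x\right\Vert_{\infty}$ on $E$, the non-expansiveness $\left\Vert y^{\ast}-z^{\ast}\right\Vert\leq\left\Vert y-z\right\Vert$ makes $E_{\ast}$ norm-closed, and the quotient map onto $\mathbb{R}$ controls the scalar component; this upgrades $\left\Vert .\right\Vert$-limits to $\left\Vert .\right\Vert_{\infty}$-limits and then to $p$-limits on all of $E\oplus\mathbb{R}$. Your route is more self-contained (it does not need the machinery of Sections 3--4, nor the Baire-category-flavored renorming result for Banach lattices with strong units), at the cost of a longer hands-on argument; the paper's route is shorter but leans on its earlier theorems. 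Two small points you wave at rather than prove, both fixable in a line or two: the Archimedean property of $E\oplus\mathbb{R}$ (needed for $p$ to be a norm rather than a seminorm; it follows, e.g., from the quotient map plus the estimate $\left\Vert y\right\Vert\leq\alpha$ whenever $\left\vert y\right\vert\leq\alpha$ in $E\oplus\mathbb{R}$), and the $M$-property of the gauge, which is standard.
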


\begin{proof}
Let $x\in E$ and $\alpha\in\mathbb{R}$. There exists $\mu\in\left(
0,\infty\right)  $ such that $\mu\left\vert x\right\vert \in E_{\ast}$ or,
equivalently, $\mu\left\vert x\right\vert \leq1$ in $E\oplus\mathbb{R}$. Thus,%
\[
\left\vert x+\alpha\right\vert \leq\left\vert x\right\vert +\left\vert
\alpha\right\vert =\frac{1}{\mu}\left\vert \mu x\right\vert +\left\vert
\alpha\right\vert \leq\frac{1}{\mu}+\left\vert \alpha\right\vert .
\]
It follows that $1$ is a strong unit in $E\oplus\mathbb{R}$. On the other
hand, $E\oplus\mathbb{R}$ is a Banach lattice with respect to any unitization
norm. Accordingly, the gauge function given by%
\[
\left\Vert x+\alpha\right\Vert _{\infty}=\inf\left\{  \lambda\in\left(
0,\infty\right)  :\left\vert x+\alpha\right\vert <\lambda\right\}  ,\text{ for
all }x\in E\text{ and }\alpha\in\mathbb{R}%
\]
defines a lattice norm on $E\oplus\mathbb{R}$ under which $E\oplus\mathbb{R}$
is a unital $AM$-space. We derive, by Kakutani's Representation Theorem, that
$\left(  E\oplus\mathbb{R},\left\Vert .\right\Vert _{\infty}\right)  $ is
lattice isometric to $C\left(  K\right)  $ for some compact Hausdorff space
$K$. Now, observe that if $x\in E$ then%
\[
\left\Vert x\right\Vert _{\infty}=\inf\left\{  \lambda\in\left(
0,\infty\right)  :\left\vert x\right\vert <\lambda\right\}  =\inf\left\{
\lambda\in\left(  0,\infty\right)  :\frac{1}{\lambda}x\in E_{\ast}\right\}  .
\]
On the other hand, since $E$ is a maximal order ideal in $E\oplus\mathbb{R}$
(see Proposition \ref{ideal}), there exists $k_{0}\in K$ such that $E$ is
lattice isomorphic with the maximal order ideal%
\[
M_{k_{0}}=\left\{  f\in C\left(  K\right)  :f\left(  k_{0}\right)  =0\right\}
\]
of $C\left(  K\right)  $ (see \cite[Example 27.7]{LZ1971}). Putting
$X=K\backslash\left\{  k_{0}\right\}  $, we derive that $X$ is locally compact
Hausdorff space \cite{E1989} and $E$ is lattice isomorphic with $C_{0}\left(
X\right)  $, completing the proof of the theorem.
\end{proof}

The last lines of the paper outlines how can Corollary \ref{C0} be considered
as a generalization of the Kakutani's representation theorem. For continuous
real-valued functions on a topological space, we refer the reader to
\cite{E1989,GJ1960}. Consider a Banach lattice $E$ with a strong unit $e>0$
and put%
\[
x^{\ast}=e\wedge x,\text{ for all }x\in E^{+}.
\]
Obviously, this equality makes $E$ into a truncated Banach lattice. Moreover,
it is readily seen that the extra condition of Corollary \ref{C0} is
fulfilled. It follows that $E$ is lattice isomorphic to $C_{0}\left(
X\right)  $ for some locally compact Hausdorff space. If $X$ is a compact, we
have nothing to prove. Assume that $X$ is not compact. Hence, $C_{0}\left(
X\right)  $ is lattice isomorphic with the maximal order ideal%
\[
M_{\infty}=\left\{  f\in C\left(  X^{\ast}\right)  :f\left(  \infty\right)
=0\right\}
\]
of $C\left(  \omega X\right)  $, where $\omega X$ is the one-point
compactification of $X$. Now, as $E$ has a strong unit, $M_{\infty}$ in turn
has a strong unit, say $u$. Obviously, $u\left(  x\right)  \neq0$ for all
$x\in X$. Thus, a lattice isomorphism $T:M_{\infty}\rightarrow C\left(  \omega
X\right)  $ can be defined by putting%
\[
T\left(  f\right)  =u^{-1}f,\text{ for all }f\in M_{\infty}.
\]
We derive that $E$ and $C\left(  \omega X\right)  $ are lattice isomorphic.
Finally, re-norming finally $E$ in a natural way, we infer that $E$ and
$C\left(  \omega X\right)  $ are lattice isometric.

\end{document}